\newtheorem{theorem}{Theorem}[section]
\theoremstyle{definition}
\newtheorem{lemma}[theorem]{Lemma}
\newtheorem{claim}[theorem]{Claim}
\newtheorem{question}[theorem]{Question}
\newtheorem{prop}[theorem]{Proposition}
\theoremstyle{remark}
\newcommand{\dbar}{\kern-.1em{\raise.8ex\hbox{ -}}\kern-.6em{d}}
\def \be{\begin{equation}}
\def \ee{\end{equation}}
\newcommand{\sphere}{\mathbb{S}^{n-1}}
\newcommand{\R}{\mathbb{R}}
\title{Monotonicity of the Gaussian measure under Banaszczyk transforms}
\author{Maud Szusterman}
\begin{document}
\maketitle

\begin{abstract}

In the proof of his famous $5K$-theorem, W. Banaszczyk introduced a transformation of convex bodies for which the Gaussian measure is monotone. In this note, we present a simplified proof of this monotonicity by slightly modifying Banaszczyk’s transform, so that it interacts smoothly with Ehrhard symmetrizations, thereby yielding a somewhat easier proof of the $5K$-theorem.
\end{abstract}

\section{Introduction}
\label{section:intro}
\emph{Notations :} For $p \in [1,+\infty]$, denote $||\cdot||_p$ the $\ell_p$-norm on $\R^n$, whose associated unit ball we denote $B_p^n=\{(x_1,...,x_n)\in \R^n : |x_1|^p +... + |x_n|^p\leq 1\}$. Denote $\gamma_n$ the (standard, centered) Gaussian measure on $\R^n$, i.e. the probability measure on $\R^n$ with density $\frac{d\gamma_n(x)}{dx}=(2\pi)^{-n/2} e^{-|x|^2/2}$, where $|x|:=||x||_2$. Denote $\Phi(t):=\gamma_1((-\infty,t])=\int_{-\infty}^t e^{-u^2/2} \frac{du}{\sqrt{2\pi}}$, the cumulative distribution of $\gamma_1$, and denote $\Phi^{-1} : [0,1]\to \R \cup \{\pm \infty\}$ its inverse. Let $\mathcal{K}_n$ denote the set of closed convex subsets of $\R^n$.

Given two convex bodies $U,V\subset \R^n$, the latter containing the origin in its interior, a natural question, already studied by Spencer \cite{Spencer63}, is to determine the least $\beta>0$ such that for any finite sequence of vectors $u_1, ... , u_t$ picked in $U$, there exists signs $\epsilon_i=\pm 1$ such that the signed sum $\epsilon_1 u_1 +... + \epsilon_t u_t$ lies in $\beta V$. More formally, given a convex body $V\subset \R^n$, such that $0\in \text{int}(V)$, the vector balancing constant $vb(U,V)$ of a compact set $U\subset \R^n$ with respect to $V$ is defined as
$$vb(U,V)=\sup_{t\geq 1} \max_{u_1, ... , u_t\in U} \min \left\{\left|\left|\epsilon_1 u_1+\dots +\epsilon_t u_t \right|\right|_V \hspace{1mm} | \epsilon\in \{\pm 1\}^t\right\}$$
where $\lvert\lvert\cdot\rvert\rvert_V$ denotes the Minkowski gauge associated to $V$.
A closely related quantity is $\beta(U,V)=\max_{u_1, ... , u_n\in U} \min \left\{\lvert\lvert\sum_{i=1}^n \epsilon_i u_i \rvert\rvert_V : \hspace{1mm} \varepsilon\in \{\pm 1\}^n\right\}$, where $n=\text{dim}(V)$. In fact, $vb$ and $\beta$ are comparable: one has, independently of the dimension: $\beta(U,V)\leq vb(U,V)\leq 2 \beta(U,V)$ (see \cite{LSV87}). Known examples include $\beta(B^n_2, B^n_2)=\sqrt{n}$, $\beta(B_1^n,B^n_{\infty})\leq 2$ (due to Beck-Fiala \cite{BF81}), and $\beta(B_{\infty}^n, B_{\infty}^n)=O(\sqrt{n})$ (due to Gluskin \cite{Gl89}, and independently, to Spencer \cite{Sp85}). The Koml\'os conjecture asks whether $\beta(B_2^n, B_{\infty}^n)$ remains bounded as the dimension $n$ grows :
$\beta(B_2^n,B_{\infty}^n)=O(1)$? The current best upper bound, due to Banaszczyk \cite{Ban98}, states that $\beta(B_2^n, B_{\infty}^n)=O(\sqrt{\log n})$. Another interesting question, given a known upper bound $\beta(U,V)\leq \beta^+$, is whether one can achieve it efficiently, i.e. does there exist some algorithm which, given $u_1, ...,u_t$ as input, efficiently produces some $\varepsilon\in \{\pm 1\}^t$ such that $||\epsilon_1 u_1 +... +\epsilon_t u_t||_V \leq \tilde{O}(\beta^+)$? In \cite{Bansal}, Bansal et al. answered this question in the affirmative, for $(U,V)=(B_2^n, B_{\infty}^n)$, and $\beta^+=\sqrt{\log n}$, by giving a randomized algorithm (based on the so-called \emph{Gram-Schmidt random walk}) which produces in polynomial time a sequence of signs $\epsilon_i$, which matches Banaszczyk's upper bound (with high probability). In this note, we do not address the question of efficiency. Here our aim is to revisit the original proof by Banaszczyk (1998), of his $5K$-theorem (stated as Theorem \ref{Ban5K} below), and offer some simplification in the proof.
\begin{theorem}[Theorem 1, \cite{Ban98}]
\label{Ban5K}
Let $K\subset \R^n$ be a closed convex set, $\gamma_n(K) \geq \frac{1}{2}$. Let $u_1, ... , u_t\in B_2^n$ (with $t\geq 1$ arbitrary). Then there exists $\epsilon\in \{\pm 1\}^t$ such that $\epsilon.u :=\epsilon_1 u_1+...+\epsilon_t u_t \in 5K$.
\end{theorem}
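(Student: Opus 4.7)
The plan is to construct, for each $u \in B_2^n$, a convexity-preserving transform $T_u : \mathcal{K}_n \to \mathcal{K}_n$ enjoying two key properties: a \emph{Gaussian monotonicity} $\gamma_n(T_u K) \geq \gamma_n(K)$, and a \emph{sign-extraction} property stating that for every $x \in T_u K$ at least one of $x + u$, $x - u$ lies in $K$. Once such a transform is in hand, Theorem \ref{Ban5K} follows by iteration. Set $K_0 := 5K$ and $K_i := T_{u_i}(K_{i-1})$ for $i = 1, \dots, t$. The hypothesis $\gamma_n(K) \geq 1/2$ forces $0 \in K$ (otherwise a separating hyperplane would put $K$ inside an open half-space of Gaussian mass $< 1/2$); together with convexity this yields $K \subset 5K$, so
$$\gamma_n(K_t) \geq \gamma_n(K_0) = \gamma_n(5K) \geq \gamma_n(K) \geq 1/2,$$
and in particular $K_t$ is nonempty. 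From the sign-extraction property, starting from any $x_t \in K_t$ one recursively pulls back through the iterates to extract signs $\epsilon_t, \ldots, \epsilon_1 \in \{\pm 1\}$ such that $x_t + \sum_{i=1}^t \epsilon_i u_i \in K_0 = 5K$. To conclude, one further arranges that $x_t$ can be taken to be $0$, for instance by verifying that $T_u$ preserves membership of the origin, so that $0 \in K$ propagates to $0 \in K_t$; then $\sum_i \epsilon_i u_i \in 5K$.

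The main obstacle is the construction of $T_u$ and the verification of Gaussian monotonicity. Following the abstract, the idea is to define $T_u$ so that it commutes cleanly with Ehrhard symmetrization in direction $e := u/|u|$. Ehrhard symmetrization preserves both $\gamma_n$ and convexity, and turns $K$ into a set of the form $\{x : x \cdot e \leq f(y)\}$, where $y$ denotes the projection of $x$ onto $e^\perp$ and $f$ is a concave function on $e^\perp$. The monotonicity inequality $\gamma_n(T_u K) \geq \gamma_n(K)$ should then reduce to a one-dimensional statement along each fibre parallel to $e$, expressible in terms of $\Phi$ and $\Phi^{-1}$ and amenable to direct computation; convexity of $T_u K$ should follow from a standard Ehrhard-type concavity argument applied to the one-dimensional profile of $f$.

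The constant $5$ reflects a quantitative balance between the Euclidean magnitude of the shift $\pm u$ (at most $1$) and the ``slack'' the transform $T_u$ absorbs at each step in exchange for the sign-extraction property; the initial dilation by $5$ must be large enough to accommodate this slack uniformly in $t$. I expect the iteration scheme and sign-extraction step to be largely formal, while the principal technical difficulty—and the main novelty claimed by the note—lies in pinning down the right definition of $T_u$ and establishing its monotonicity via the Ehrhard machinery.
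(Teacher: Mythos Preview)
Your overall framework matches the paper's: iterate a transform $T_u$ (the paper uses $K*u$, or its Gaussian variant $K\circ u$) enjoying Gaussian monotonicity and the sign-extraction inclusion $T_u K \subset (K-u)\cup(K+u)$, conclude $0\in K_t$, then pull back signs. But the way you insert the constant $5$ is wrong. Gaussian measure is not scale-invariant, so setting $K_0=5K$ and applying $T_{u_i}$ with $\|u_i\|_2\le 1$ is \emph{not} equivalent to keeping $K$ and applying $T_{u_i/5}$. The monotonicity statement (Theorem~\ref{thm3Ban98}) asserts $\gamma_n(K'*u)\ge \gamma_n(K')$ only under the joint hypothesis $\gamma_n(K')\ge \tfrac12$ \emph{and} $\|u\|_2\le \tfrac15$; for $\|u\|_2$ near $1$ no such monotonicity is available, and your iteration breaks at the very first step. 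The paper instead sets $K_0=K$ and $K_i=K_{i-1}*(u_i/5)$, so that Theorem~\ref{thm3Ban98} applies at every step; sign-extraction then gives $0\in K_t\subset\bigcup_\epsilon (K-\epsilon\cdot u/5)$, i.e.\ $\epsilon\cdot u\in 5K$ for some $\epsilon$. That is where the $5$ enters --- as the reciprocal of the radius in the monotonicity theorem --- not as an initial dilation of $K$.

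Two smaller points. First, once you know $\gamma_n(K_t)\ge\tfrac12$ and $K_t$ is closed convex, the conclusion $0\in K_t$ is immediate by the separating-hyperplane argument you already invoked; there is no need to ``further arrange that $T_u$ preserves the origin''. Second, your expectation that Ehrhard symmetrization collapses monotonicity to a one-dimensional computation per fibre is too optimistic. The paper's reduction (for its variant $K\circ u$) uses one Ehrhard symmetrization along $u$ together with the one-dimensional Gaussian isoperimetric inequality, then a second symmetrization $\mathcal{E}_2$ orthogonal to $u$, and finally the planar Gaussian isoperimetric inequality, to reduce to half-planes in $\R^2$ (Lemma~\ref{halfplanelem}); the remaining two-dimensional inequality still requires a genuine estimate.
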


For $n\geq 2$, one has $\gamma_n(tB_{\infty}^n)\geq \frac{1}{2}$ for $t\geq \sqrt{2\log(n)}$, and so Theorem \ref{Ban5K} implies,  (with constant $C=5\sqrt{2}$), the  upper bound $vb(B_2, B_{\infty}) \leq C \sqrt{\log n}$.

In order to prove Theorem \ref{Ban5K}, the following transform of convex sets is introduced (\cite{Ban98}): given $K\in \mathcal{K}_n$, and $u\in \R^n$, a non-zero vector, define $K*u:=(K+[-u,u]) \cap (Z_B +\R u)$, where $Z_B=Z_B(K,u)\subset u^{\perp}$ is the convex set of those $y\in u^{\perp}$ for which $(K-y)\cap \R u$ has (euclidean) length at least $2||u||_2$. If $I=[a,b]$ is an interval of length $b-a\geq 2r$, then $(I-r)\cup (I+r)=[a-r,b+r]$ is convex. Therefore $K*u$ can equivalently be defined as $K*u=\left((K-u) \cup (K+u)\right) \cap \left(Z_B +\R u\right) $. In particular, $K*u \subset (K-u)\cup (K+u)$.

The following monotonicity statement (Theorem 3 in \cite{Ban98}), is key for proving Theorem \ref{Ban5K} using Banaszczyk's transform $K*u$.
\begin{theorem}[Theorem 3, \cite{Ban98}]
\label{thm3Ban98}
Let $K\subset \R^n$ be a closed convex set, such that $\gamma_n(K)\geq \frac{1}{2}$. Let $u\in \R^n$, with $||u||_2\leq \frac{1}{5}$. Then $\gamma_n(K*u)\geq \gamma_n(K)\geq \frac{1}{2}$.
\end{theorem}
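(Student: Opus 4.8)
The plan is to slice $\gamma_n$ along lines parallel to $u$, apply the one–dimensional Gaussian isoperimetric inequality on each fiber to reduce the claim to a statement about the concave ``Gaussian profile'' of $K$, and then pass to dimension one by a Gaussian rearrangement. Write $r:=||u||_2\le\tfrac15$ and $v:=u/r$. For $y$ in the convex projection $P:=\pi_{u^{\perp}}(K)$ let $[a(y),b(y)]$ be the segment $\{s\in\R:y+sv\in K\}$; Fubini for $\gamma_n$ gives $\gamma_n(K)=\int_{P}\bigl(\Phi(b(y))-\Phi(a(y))\bigr)\,d\gamma_{n-1}(y)$, and, since $Z_B=\{y:b(y)-a(y)\ge 2r\}$ is convex and $K*u=(K+[-u,u])\cap(Z_B+\R u)$ has fiber $[a(y)-r,b(y)+r]$ over $y\in Z_B$ and empty fiber elsewhere, $\gamma_n(K*u)=\int_{Z_B}\bigl(\Phi(b(y)+r)-\Phi(a(y)-r)\bigr)\,d\gamma_{n-1}(y)$. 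Now $[a(y)-r,b(y)+r]=[a(y),b(y)]+[-r,r]$, so the one–dimensional Gaussian isoperimetric inequality yields, fiber by fiber,
\[
\Phi(b(y)+r)-\Phi(a(y)-r)\ \ge\ \Phi\bigl(\Phi^{-1}(m(y))+r\bigr),\qquad m(y):=\Phi(b(y))-\Phi(a(y)),
\]
and hence $\gamma_n(K*u)\ge\int_{Z_B}\Phi\bigl(h(y)+r\bigr)\,d\gamma_{n-1}(y)$ with $h:=\Phi^{-1}\circ m$.

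\emph{Reduction to the profile.} By Ehrhard's theorem the profile $h$ is concave on $P$ (its subgraph is the Ehrhard symmetral of $K$ in the direction $v$), and $\gamma_n(K)=\int_{P}\Phi(h)\,d\gamma_{n-1}$. Since an interval of Euclidean length $<2r$ has $\gamma_1$–mass $<2\Phi(r)-1$, the super–level set $\{h\ge h_0\}$, with $h_0:=\Phi^{-1}(2\Phi(r)-1)$, is contained in $Z_B$ (note $h_0<0$). Therefore it suffices to prove that, whenever $h$ is concave on a convex $P\subset\R^{n-1}$ with $\int_{P}\Phi(h)\,d\gamma_{n-1}\ge\tfrac12$,
\[
\int_{\{h\ge h_0\}}\Phi\bigl(h(y)+r\bigr)\,d\gamma_{n-1}(y)\ \ge\ \int_{P}\Phi\bigl(h(y)\bigr)\,d\gamma_{n-1}(y).
\]
Equivalently, the \emph{modified} transform $K\mapsto(K+[-u,u])\cap(\{h_K\ge h_0\}+\R u)$ — which is contained in $K*u$, hence still contained in $(K-u)\cup(K+u)$ and so still suitable for the proof of Theorem~\ref{Ban5K} — is Gaussian–monotone. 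Cutting by the level set $\{h\ge h_0\}$ of the concave profile, rather than by the level set $Z_B$ of the Euclidean fiber length, is the ``slight modification'': it is what makes the transform compatible with Ehrhard symmetrizations.

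\emph{Rearrangement to one dimension, and conclusion.} The two integrals above depend only on the $\gamma_{n-1}$–distribution of $h$. Replace $h$ by its Gaussian decreasing rearrangement, obtained by sending each $\{h\ge s\}$ to the half–space $\{y_1\le c_s\}$ with $c_s:=\Phi^{-1}(\gamma_{n-1}(\{h\ge s\}))$: this preserves the distribution of $h$, hence both integrals, and by the Ehrhard inequality $s\mapsto c_s$ is concave, so the rearranged profile is a concave, non–increasing function $g$ of the single variable $y_1$. The claim becomes the one–dimensional statement: if $g:\R\to[-\infty,\infty)$ is concave and non–increasing with $\int_{\R}\Phi(g)\,d\gamma_1\ge\tfrac12$, then $\int_{\{g\ge h_0\}}\bigl(\Phi(g+r)-\Phi(g)\bigr)\,d\gamma_1\ge\int_{\{g<h_0\}}\Phi(g)\,d\gamma_1$. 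A standard extremality argument reduces this to $g$ affine (possibly truncated to a half–line); then, pushing forward by $t\mapsto g(t)$, the hypothesis reads $X\sim\mathcal N(q,p^2)$ with $\Phi\!\bigl(q/\sqrt{1+p^2}\bigr)\ge\tfrac12$, i.e.\ $q\ge 0$, and the inequality becomes the elementary estimate $\mathbb E\bigl[\mathbf 1_{\{X\ge h_0\}}(\Phi(X+r)-\Phi(X))\bigr]\ge\mathbb E\bigl[\mathbf 1_{\{X<h_0\}}\Phi(X)\bigr]$, which holds because $r\le\tfrac15$ forces $h_0=\Phi^{-1}(2\Phi(r)-1)$ far enough to the left that the (nonnegative) right side is dominated by the left. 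Chaining the steps gives $\gamma_n(K*u)\ge\gamma_n(K)\ge\tfrac12$.

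\emph{Where the work is.} The conceptual step is the reduction to the Gaussian profile: once the fiber isoperimetric inequality has been used, $\gamma_n(K*u)$ and $\gamma_n(K)$ are controlled by the mere \emph{distribution} of the concave function $h$, which is what makes Gaussian rearrangement (equivalently, Ehrhard symmetrization in directions orthogonal to $u$) do the work — and this is precisely what forces one to replace $Z_B$ by the level set $\{h\ge h_0\}$. The analytic inputs (the one–dimensional profile inequality, its reduction to the affine case, and the final estimate) are routine but need care; it is the last of these that, with Banaszczyk's original Euclidean transform, is the delicate part of \cite{Ban98}.
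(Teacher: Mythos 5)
Your outline is exactly the route the paper takes: slice along $\R u$, apply the one-dimensional Gaussian isoperimetric inequality fiber-by-fiber to pass to the concave ``Gaussian profile'' $h=\Phi^{-1}\circ m$ (this is the Ehrhard symmetral $\Lambda=\mathcal{E}(K)$), replace $Z_B$ by the super-level set $\{h\ge h_0\}$ (this is precisely the paper's modified transform $K\circ u$, with $h_0=-w_r$), then apply a Gaussian rearrangement in the orthogonal variables (the paper's $\mathcal{E}_2$) to land in the plane, and finally reduce the planar hypograph to a half-plane. So the proposal is not a different route; it is a compressed description of the paper's Steps 1--3. But two points are gaps, not just compression.

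First, the passage ``a standard extremality argument reduces this to $g$ affine (possibly truncated to a half-line)'' is doing a lot of unacknowledged work. The functional $g\mapsto\int_{\{g\ge h_0\}}\Phi(g+r)\,d\gamma_1$ is not linear in $g$, so extremality of affine functions inside the convex cone of concave non-increasing profiles does not follow from convexity alone. The paper's Step 3 is a genuine argument: it constructs a tangent line to $L_\theta$ at the boundary height $-w_r$, builds a comparison half-plane $H$ with $\gamma_2(H)\ge\gamma_2(L)$, compares the two cut-off cones $C_r(L)$ and $C_r(H)$ using the radial monotonicity of the Gaussian density, and applies the two-dimensional Gaussian isoperimetric inequality to compare $\gamma_2(L'+\varepsilon B_2^2)$ with $\gamma_2(H+\varepsilon B_2^2)$; this is where the hypothesis $\gamma_n(K)\ge\frac12$ is crucially used (it forces $d\ge 0$, hence the cone $C_r(H)$ sits in the right half-plane). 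None of this is ``standard,'' and your write-up would need to supply it.

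Second, and more seriously, the final sentence --- that the affine case ``holds because $r\le\tfrac15$ forces $h_0$ far enough to the left'' --- is an assertion, not a proof, and it is almost certainly false for the modified transform you (and the paper) use. The sufficient condition the paper derives for the half-plane case is $\Phi(-(w_r-r))\le r(w_r-r)$; at $r=\tfrac15$ one has $w_r\approx 1.00$, so the left side is $\Phi(-0.8)\approx 0.212$ while the right side is $0.2\times 0.8=0.16$, so the inequality fails. The paper itself only obtains the constant $\tfrac17$ (with numerical threshold around $0.149$) and explicitly accepts a $7/5$ loss relative to Banaszczyk's Theorem~3. The loss is a real feature of the Gaussian cut $\{h\ge h_0\}\subset Z_B$: it makes Ehrhard symmetrization commute cleanly with the transform (Claim~\ref{claim2}), but it throws away part of $Z_B$, and that is precisely what degrades the constant from $\tfrac15$ to $\tfrac17$. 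To recover $\tfrac15$ one has to work with Banaszczyk's original $Z_B$, for which the commutation with $\mathcal{E}_2$ is lost and a different, more delicate argument is needed; this is why the paper states Theorem~\ref{thm3Ban98} as a citation to \cite{Ban98} rather than proving it. As written, your proof establishes (modulo the missing Step 3 argument) Theorem~\ref{gaussianmono} with the constant $\tfrac17$, not Theorem~\ref{thm3Ban98} with $\tfrac15$.
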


Our contribution is to provide more simple arguments to derive Theorem \ref{thm3Ban98}, by studying a Gaussian variant of $K*u$, which we shall denote $K\circ u$, and which is such that $K\circ u\subset K*u$ for any $K, u$ (see (\ref{dfn:gauss}) on page 3 for a precise definition of $K\circ u$). We prove :
\begin{theorem}
\label{gaussianmono}
Let $K\subset \R^n$ be a closed convex set, such that $\gamma_n(K)\geq \frac{1}{2}$. Let $u\in \R^n$, with $||u||_2\leq \frac{1}{7}$. Then $\gamma_n(K\circ u)\geq \gamma_n(K)\geq \frac{1}{2}$.
\end{theorem}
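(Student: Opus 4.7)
My strategy is to reduce the statement to a one-dimensional Gaussian computation via Ehrhard symmetrization along the direction $u$. Recall that Ehrhard's symmetrization $K \mapsto K^{\sharp}$ replaces each fiber $K \cap (y + \R u)$ (for $y \in u^{\perp}$) by the centered interval of the same $\gamma_1$-measure on the line $y + \R u$; by Ehrhard's theorem $K^{\sharp} \in \mathcal{K}_n$ whenever $K \in \mathcal{K}_n$, and $\gamma_n(K^{\sharp}) = \gamma_n(K)$. I would first unpack the definition (\ref{dfn:gauss}) of $K \circ u$: where Banaszczyk's $Z_B$ selects fibers whose Euclidean length exceeds $2\|u\|_2$, its Gaussian counterpart $Z_G = Z_G(K,u)$ should select fibers on the basis of their one-dimensional Gaussian mass, via a threshold depending only on $\|u\|_2$. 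The whole point of the modification is that such a Gaussian condition is preserved under Ehrhard rearrangement of fibers.

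The first key step is a compatibility lemma: for every $K \in \mathcal{K}_n$,
$$\gamma_n(K \circ u) \geq \gamma_n(K^{\sharp} \circ u),$$
ideally established through the set-theoretic inclusion $(K \circ u)^{\sharp} \supseteq K^{\sharp} \circ u$. Since $Z_G(K,u)$ depends on $y$ only through $\gamma_1(K \cap (y + \R u))$, which is invariant under Ehrhard symmetrization, and since the thickening $K + [-u,u]$ reduces fiberwise to a symmetric interval once the fibers of $K$ are already centered, the inclusion reduces to an application of Ehrhard's inequality to the $(n-1)$-dimensional projection onto $u^{\perp}$. This step reduces the theorem to the case $K = K^{\sharp}$.

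Next, I would handle the symmetric case by direct fiberwise analysis. For $K = K^{\sharp}$, write $K \cap (y + \R u) = y + [-a(y), a(y)] \tfrac{u}{\|u\|_2}$, with $a(y) = 0$ outside the projection of $K$. Set $g(a) = 2\Phi(a) - 1$ for $a \geq 0$. Then the fiber of $K + [-u,u]$ above $y$ is the centered interval of half-length $a(y) + \|u\|_2$, and the fiber of $K \circ u$ above $y$ equals this enlarged interval when $y \in Z_G$ and is empty otherwise. Fubini yields
$$\gamma_n(K \circ u) - \gamma_n(K) = \int_{Z_G} \bigl[g(a(y) + \|u\|_2) - g(a(y))\bigr]\, d\gamma_{n-1}(y) - \int_{u^{\perp} \setminus Z_G} g(a(y))\, d\gamma_{n-1}(y).$$
The assumption $\gamma_n(K) \geq \tfrac{1}{2}$ controls the $\gamma_{n-1}$-mass of the projection of $K$, hence the size of $u^{\perp} \setminus Z_G$, while $\|u\|_2 \leq \tfrac{1}{7}$ is calibrated so that the pointwise Gaussian gain $g(a + \|u\|_2) - g(a)$ on good fibers dominates the loss $g(a)$ on bad ones after weighting by $\gamma_{n-1}$.

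The main obstacle I anticipate is the compatibility step: engineering the Gaussian threshold defining $Z_G$ so that $(K \circ u)^{\sharp} \supseteq K^{\sharp} \circ u$ literally holds. The right definition must make $Z_G$ a function of fiber-$\gamma_1$-mass alone, and the proof of the inclusion will likely hinge on Ehrhard's convexity theorem applied to the $(n-1)$-dimensional marginal of $K$ along $u$. Once this compatibility is in hand, the one-dimensional Gaussian bookkeeping in the third paragraph is essentially a routine calibration, with the constant $\tfrac{1}{7}$ (in place of Banaszczyk's $\tfrac{1}{5}$) emerging from optimizing the pointwise gain-vs-loss comparison.
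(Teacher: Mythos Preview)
Your compatibility step has the inequality backwards. If $K^{\sharp}$ replaces each fiber $I_y$ by the \emph{centered} interval $[-a(y),a(y)]$ of the same $\gamma_1$-mass, then fiberwise one gets $(K\circ u)^{\sharp}\subseteq K^{\sharp}\circ u$, not $\supseteq$: among all intervals $I$ with $\gamma_1(I)$ fixed, the centered one \emph{maximizes} $\gamma_1(I+[-r,r])$, while the half-line is the minimizer (this is exactly the one-dimensional Gaussian isoperimetric inequality). Hence your reduction only yields $\gamma_n(K\circ u)\leq \gamma_n(K^{\sharp}\circ u)$, which is useless for lower-bounding $\gamma_n(K\circ u)$. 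There is a second, independent problem: the centered Gaussian rearrangement does not preserve convexity. For $K=\{(y,z):z\leq y\}\subset\R^2$ one gets $a(y)=\Psi^{-1}(\Phi(y))$ with $\Psi(x)=2\Phi(x)-1$, and $\Psi^{-1}\circ\Phi$ is strictly convex (its derivative runs from $0$ at $-\infty$ to $1$ at $+\infty$), so $K^{\sharp}=\{|z|\leq a(y)\}$ is not convex. Ehrhard's inequality guarantees concavity of $y\mapsto\Phi^{-1}(\gamma_1(I_y))$, which is why the paper's symmetrization $\mathcal{E}(K)$ uses half-lines $(-\infty,\Phi^{-1}(\gamma_1(I_y))]$: this choice both preserves convexity and, being the isoperimetric minimizer, delivers the inequality in the right direction, $\gamma_n(K\circ u)\geq\gamma_n(\mathcal{E}(K)\circ u)$.

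Even after that correct first step, the problem is not a one-variable calibration. The paper needs a second Ehrhard symmetrization $\mathcal{E}_2$ (now in the directions orthogonal to $u$) to pass from hypographs in $\R^n$ to non-increasing planar hypographs, and then invokes the two-dimensional Gaussian isoperimetric inequality together with the hypothesis $\gamma_n(K)\geq\tfrac12$ to reduce further to a single half-plane $H\subset\R^2$. Only at that stage does the question become the concrete estimate $\gamma_2(H\circ re_2)\geq\gamma_2(H)$, and it is in this half-plane computation that the constant $\tfrac17$ is determined. Your sketch collapses these three reductions into a ``routine'' fiberwise bookkeeping over $u^{\perp}$; in particular, the assertion that $\gamma_n(K)\geq\tfrac12$ ``controls the $\gamma_{n-1}$-mass of the projection'' is not correct (a thin slab of measure $\tfrac12$ projects onto all of $u^{\perp}$), and the gain-versus-loss comparison you describe is precisely the hard part that the paper's Steps~2 and~3 are designed to tame.
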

which implies Theorem \ref{thm3Ban98}, and hence Theorem \ref{Ban5K}, up to a factor $7/5$. A particular case of Theorem \ref{gaussianmono} is when $K$ is an affine half-space; this case is equivalent to the following statement.

\begin{lemma}
    \label{halfplanelem} Let $H\subset \R^2$ be a halfplane containing $0_{\R^2}$. Let $u\in \R^2$, with $||u||_2\leq \frac{1}{7}$. Then $\gamma_2(H\circ u)\geq \gamma_2(H)\geq \frac{1}{2}$.
\end{lemma}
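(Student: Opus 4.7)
The plan is to reduce Lemma \ref{halfplanelem} to an explicit one-dimensional calculus inequality, by combining rotational symmetry with the convex structure of a halfplane. I would first normalize: using rotation-invariance of $\gamma_2$, assume $u = r e_1$ with $r = \|u\|_2 \leq 1/7$, and write $H = \{(x_1,x_2)\in\R^2 : \alpha x_1 + \beta x_2 \leq c\}$ for $(\alpha,\beta)$ a unit vector and $c\geq 0$ (the condition $0\in H$ also gives $\gamma_2(H)=\Phi(c)\geq 1/2$). By reflecting across either coordinate axis, I may further assume $\alpha,\beta\geq 0$. Two degenerate cases are immediate: if $\alpha=0$ (so $H=\{x_2\leq c\}$ has normal perpendicular to $u$), every slice of $H$ along $\R u$ is $\R$ or empty and $H\circ u = H$; if $\beta=0$ (so $H=\{x_1\leq c\}$ and $u$ is parallel to the normal), all $u$-slices coincide with $(-\infty,c]$ and the transform just widens to $(-\infty,c+r]$, giving $\gamma_2(H\circ u)=\Phi(c+r)\geq \Phi(c)$.

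For the generic case $\alpha,\beta\in(0,1)$, I would use the halfplane structure to give an explicit description of $H\circ u$. Parametrize $u^\perp = \R e_2$ by $y$; the $u$-slice $(H - y e_2)\cap \R e_1$ equals the half-line $(-\infty,(c-\beta y)/\alpha]$, whose $\gamma_1$-measure $\Phi((c-\beta y)/\alpha)$ is strictly decreasing in $y$. By the definition of the Gaussian analog $\circ$, the set $Z=Z(H,u)\subset u^\perp$ of admissible slice-heights is then a half-line $(-\infty, y^*]$, with threshold $y^*$ determined by an explicit equation in $(c,\alpha,\beta,r)$. Combining this with the identity $(H-u)\cup(H+u) = \{\alpha x_1 + \beta x_2 \leq c + r\alpha\}$ (a slightly larger halfplane), I obtain
\[H\circ u = \{(x_1,x_2) : x_2 \leq y^*,\; \alpha x_1 + \beta x_2 \leq c + r\alpha\},\]
and the claim $\gamma_2(H\circ u)\geq \Phi(c)$ becomes
\[\int_{-\infty}^{y^*} \Phi\!\left(\tfrac{c-\beta y}{\alpha} + r\right) d\gamma_1(y) \;\geq\; \int_{\R} \Phi\!\left(\tfrac{c-\beta y}{\alpha}\right) d\gamma_1(y).\]
Rewriting, the pointwise $O(r)$ gain from shifting the integrand on $(-\infty,y^*]$ must dominate the tail loss $\int_{y^*}^\infty \Phi((c-\beta y)/\alpha)\,d\gamma_1(y)$.

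The main obstacle is verifying this comparison uniformly in $c\geq 0$ and $\alpha\in(0,1)$ with the numerical constant $1/7$. I would Taylor-expand in $r$ around $0$: the shift contributes a first-order gain of order $r$, while the threshold $y^*$ tends to $+\infty$ as $r\to 0$ at a rate controlled by the Gaussian density, making the truncation loss of order $r^2$ in typical configurations. Balancing the two terms at the extremal configuration (likely $c=0$ together with a critical angle determined by the density factors) is what dictates the value $1/7$, slightly worse than Banaszczyk's original $1/5$. Once Lemma \ref{halfplanelem} is established in this explicit form, it can serve as the base case for the Ehrhard-symmetrization argument used to prove the full Theorem \ref{gaussianmono}.
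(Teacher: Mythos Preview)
Your setup is correct and essentially identical to the paper's: normalize $u$ along a coordinate axis (the paper takes $u=re_2$, you take $u=re_1$), dispose of the two degenerate angles, and write $H\circ u$ explicitly as the intersection of the shifted halfplane $H+u$ with a half-strip $\{x_2\le y^*\}$. The resulting inequality you display is exactly the one the paper proves.

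The gap is that you stop at precisely the point where the content begins. ``Taylor-expand in $r$'' and ``balancing at the extremal configuration (likely $c=0$)'' is not a proof: a first-order expansion only shows the inequality for \emph{sufficiently small} $r$, not for all $r\le 1/7$, and there is no mechanism in your sketch for obtaining a uniform bound in $(c,\alpha)$ or for pinning down the constant. The paper does not look for an extremal configuration at all. Instead it writes $\gamma_2(H\circ re_2)-\gamma_2(H)=[\Phi(d+rs)-\Phi(d)]-\gamma_2(C_r)$ with $C_r$ the truncated cone, lower-bounds the first bracket by $\tfrac{s}{\sqrt{2\pi}}\,r\,e^{-(d+rs)^2/2}$, and upper-bounds $\gamma_2(C_r)$ using the standard Mills-ratio tail bound $\Phi(-A)\le e^{-A^2/2}/(A\sqrt{2\pi})$ together with a change of variables that makes both $d$ and $\alpha$ disappear (a common factor $s\,e^{-(d+rs)^2/2}/\sqrt{2\pi}$ cancels). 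What remains is the one-variable sufficient condition
\[
\Phi(r-w_r)\;\le\; r\,(w_r-r),\qquad \text{where }\Phi(-w_r)=\gamma_1([-r,r]),
\]
which is then checked to hold for $r\le r_0$ with $r_0>0.149>1/7$. This non-asymptotic reduction, not a perturbative expansion, is the missing idea.
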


To further motivate our study of $K\circ u$, let us briefly quote \cite{Ban98}, so as to recall here why Theorem \ref{thm3Ban98} implies Theorem \ref{Ban5K}.

Hence let $K_0\subset \R^n$ be a convex body of Gaussian measure $\gamma_n(K_0)\geq \frac{1}{2}$ , and let $u_1, ... , u_t\in \R^n$, $||u_i||_2\leq \frac{1}{5}$. Define the sequence of convex bodies: $K_1=K_0*u_1, K_2=K_1*u_2, \dots , K_t=K_{t-1}*u_t$.
By iteratively applying Theorem \ref{thm3Ban98} to the $K_{i-1}$, $i\geq 1$ (and to the $u_i$), one finds that $\frac{1}{2}\leq \gamma_n(K_0) \leq \gamma_n(K_0*u_1)=\gamma_n(K_1)\leq \gamma_n(K_2) \leq ... \leq \gamma_n(K_t)$.
In particular, since $K_t$ is closed and convex, this implies that $0\in K_t$.

On the other hand, by definition one always has $K*w\subset (K-w) \cup(K+w)$, and so 
$$K_2=K_1*u_2 \subset (K_1-u_2)\cup (K_1+u_2) \subset \cup_{\epsilon \in \{\pm 1\}^2} (K_0 -\epsilon_1 u_1 - \epsilon_2 u_2) \hspace{2mm}, \dots $$
and thus $0\in K_t \subset \cup_{\epsilon} (K_0-\epsilon.u)$ where $\epsilon$ varies through $\{\pm 1\}^t$, and where $\epsilon.u:=\epsilon_1 u_1+...+\epsilon_t u_t$.

\vspace{1mm}
Therefore Theorem \ref{gaussianmono} implies a $7K$-version of Theorem \ref{Ban5K}.
The rest of this note is dedicated to proving Theorem \ref{gaussianmono}, and is organised as follows. Section \ref{section:reductions} gives a formal definition of $K\circ u$, and then shows that to derive Theorem \ref{gaussianmono} for all $K$, it suffices to derive the particular case of half-spaces, or, equivalently (since $\gamma_n=\gamma_2 \otimes \gamma_{n-2}$), the case of half-planes (see Lemma \ref{halfplanelem} below). Section \ref{section:halfplane} is dedicated to proving Lemma \ref{halfplanelem}, i.e. the case of half-planes.

While Section \ref{section:halfplane} is short and elementary, the heart of the proof lies in Section \ref{section:reductions}, i.e. in reducing the monotonicity question to Lemma \ref{halfplanelem}. The reductions proceed in 3 steps : first reduce to hypographs, then to \emph{decreasing} planar hypographs, then to half-planes. For these reductions, our main tools are the Gaussian isoperimetric inequality, and Ehrhard symmetrizations.


The Gaussian isoperimetric inequality, due to Sudakov and Tsirelson \cite{SudakovTsirelson} and, independently, to Borell \cite{borell75}, states that amongst all convex sets of fixed (Gaussian) volume $v>0$, halfspaces (and only halfspaces) minimize the \emph{Gaussian surface area}, in the following sense:
\begin{prop}
\label{prop:gaussisop}(see \cite{borell75}, \cite{SudakovTsirelson})
Let $d\geq 1$ and let $K\subset \R^d$ be a closed convex set, $\gamma_d(K)>0$. 
Then, for any $\epsilon>0$, $\gamma_d(K+\epsilon B_2^d) \geq \Phi(\Phi^{-1}(\gamma_d(K))+\epsilon)$, with equality if and only if $K$ is a halfspace.
\end{prop}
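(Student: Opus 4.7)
The plan is to reduce Proposition \ref{prop:gaussisop} to L\'evy–Schmidt's spherical isoperimetric inequality via Poincar\'e's embedding, which is the classical route followed by \cite{borell75, SudakovTsirelson}. The key observation is that the standard Gaussian measure $\gamma_d$ on $\R^d$ arises, in the limit $N\to\infty$, as the pushforward of the uniform probability measure $\sigma_N$ on the sphere $S_N:=\sqrt{N}\,\mathbb{S}^{N-1}\subset\R^N$ under orthogonal projection onto the first $d$ coordinates.

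Concretely, fix a closed convex $K\subset\R^d$ with $\gamma_d(K)=v\in(0,1)$, and fix $\varepsilon>0$. For each $N\geq d$, form the cylindrical lift $\tilde{K}_N:=\{x\in S_N : (x_1,\dots,x_d)\in K\}$; by the Poincar\'e limit, $\sigma_N(\tilde{K}_N)\to v$, and for $r_N\to\varepsilon$ chosen appropriately, the $r_N$-neighbourhood of $\tilde{K}_N$ in $\R^N$ is contained in the cylindrical lift of $K+\varepsilon B_2^d$, whose $\sigma_N$-measure tends to $\gamma_d(K+\varepsilon B_2^d)$. Applying L\'evy–Schmidt to $\tilde{K}_N\subset S_N$, the $\sigma_N$-measure of the $r_N$-expansion of $\tilde{K}_N$ is at least that of the $r_N$-expansion of a spherical cap of the same $\sigma_N$-measure. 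Such caps project to affine halfspaces in $\R^d$, and a direct computation shows that the $\sigma_N$-measure of their $r_N$-expansion converges to $\Phi(\Phi^{-1}(v)+\varepsilon)$. Letting $N\to\infty$ on both sides yields the claimed inequality.

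For the equality case, one uses that L\'evy–Schmidt characterises caps as the unique extremizers on each $S_N$: if equality held in the Gaussian statement for a closed convex $K$ which were \emph{not} a halfspace, then already at finite $N$ the lift $\tilde{K}_N$ would be a cylindrical, non-cap set and one could quantify the strict spherical isoperimetric gap to obtain a contradiction in the limit. The main technical obstacle in this approach is the quantitative control of the Poincar\'e limit: Euclidean $\varepsilon$-neighbourhoods in $\R^d$ do not correspond exactly to Euclidean (or intrinsic geodesic) $r$-neighbourhoods on $S_N$, so one must show that the distortion coming from the spherical curvature vanishes uniformly as $N\to\infty$ for fixed $\varepsilon$, and that the same holds for the convergence of cap-expansion measures to $\Phi(\Phi^{-1}(v)+\varepsilon)$. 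An alternative, purely analytic route avoiding the sphere is Bobkov's functional Gaussian isoperimetric inequality, proved by two-point tensorization on $\{0,1\}^n$ combined with a central-limit passage; the set-theoretic statement is then recovered by applying it to smooth approximations of $\mathbf{1}_K$ and passing to the limit, with the equality case for convex $K$ read off from the rigidity of Bobkov's inequality.
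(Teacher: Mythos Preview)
The paper does not give its own proof of Proposition~\ref{prop:gaussisop}: it is quoted as a classical result of Borell and Sudakov--Tsirel'son and used as a black box. The only proof indication the paper offers is the parenthetical remark, immediately after the statement, that the proposition can be derived from Ehrhard's inequality (Proposition~\ref{prop:ehr}), with a pointer to Lata{\l}a~\cite{latala}.

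Your sketch via the Poincar\'e limit and L\'evy--Schmidt is exactly the original route of the cited works~\cite{borell75,SudakovTsirelson}, so you are reconstructing the references rather than departing from them. The route the paper alludes to is different and, once Ehrhard is granted, shorter: apply Proposition~\ref{prop:ehr} with $A=K$ and $B=R\,B_2^d$, take $\lambda=\varepsilon/R$, and let $R\to\infty$; since $(1-\lambda)K+\lambda B\to K+\varepsilon B_2^d$ and $\lambda\,\Phi^{-1}(\gamma_d(R B_2^d))\to\varepsilon$, the inequality follows without any spherical approximation. What your approach buys is self-containedness (Ehrhard's inequality is itself a deep theorem), whereas the Ehrhard route bypasses the curvature/distortion bookkeeping you correctly flag as the main technical obstacle. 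Your treatment of the equality case is the softest part of the sketch: extracting a quantitative spherical isoperimetric gap that survives the $N\to\infty$ limit is genuinely delicate, and neither the Poincar\'e-limit argument nor the bare form of Ehrhard gives it for free; in the literature this is typically handled by a separate stability argument rather than the heuristic you indicate.
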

We now state Ehrhard's inequality (from which Proposition \ref{prop:gaussisop} can be derived,
see for instance Latała \cite{latala}, page 816).
\begin{prop}[Theoreme 3.2, p. 292 \cite{ehrhard}]
\label{prop:ehr}
Let $d\geq 1$, and let $A, B\subset \R^d$ be closed convex sets with non-empty interior. Then $(\lambda \mapsto \Phi^{-1}(\gamma_d((1-\lambda) A+\lambda B)))$ is concave on $[0,1]$.
\end{prop}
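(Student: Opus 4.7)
The plan is to prove Ehrhard's inequality by induction on the dimension $d$, using Ehrhard (Gaussian) symmetrization as the workhorse for the induction step. First I reduce to the \emph{endpoint form}
\begin{equation*}
\Phi^{-1}\bigl(\gamma_d((1-\lambda)A+\lambda B)\bigr) \;\geq\; (1-\lambda)\,\Phi^{-1}(\gamma_d(A))+\lambda\, \Phi^{-1}(\gamma_d(B)) \qquad (\ast)
\end{equation*}
for all convex $A,B\subset\R^d$ with non-empty interior and $\lambda\in[0,1]$. Indeed, using convexity of $A$ and $B$, one has $(1-\mu)[(1-\lambda_1)A+\lambda_1 B]+\mu[(1-\lambda_2)A+\lambda_2 B] = (1-\lambda)A+\lambda B$ for $\lambda=(1-\mu)\lambda_1+\mu\lambda_2$, so applying $(\ast)$ to the convex pair $\bigl((1-\lambda_1)A+\lambda_1 B,\ (1-\lambda_2)A+\lambda_2 B\bigr)$ recovers concavity of $F(\lambda):=\Phi^{-1}(\gamma_d((1-\lambda)A+\lambda B))$ on any subinterval $[\lambda_1,\lambda_2]\subset[0,1]$, and hence on $[0,1]$. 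So it suffices to prove $(\ast)$.

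The base case $d=1$ is a direct calculation on intervals $A=[a_1,a_2]$, $B=[b_1,b_2]$: the interpolation is $[u(\lambda),v(\lambda)]$ with $u,v$ affine in $\lambda$, and concavity of $\lambda\mapsto \Phi^{-1}\bigl(\Phi(v(\lambda))-\Phi(u(\lambda))\bigr)$ reduces to a second-derivative estimate exploiting the log-concavity of the Gaussian density. Granting the 1D case, introduce the Ehrhard symmetrization $A^\sharp$ of a convex $A\subset\R^d$ in the direction $e_d$: each fiber $A_{x'}:=\{t\in\R:(x',t)\in A\}$ is replaced by the half-line $(-\infty,s_A(x')]$ with $s_A(x'):=\Phi^{-1}(\gamma_1(A_{x'}))$. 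Fubini gives $\gamma_d(A^\sharp)=\gamma_d(A)$; the 1D case applied to the slice inclusion $A_{(1-\mu)x'_1+\mu x'_2}\supset(1-\mu)A_{x'_1}+\mu A_{x'_2}$ shows $s_A$ is concave on its effective domain, hence $A^\sharp$ is convex; and the 1D case applied fiberwise to the Minkowski interpolation yields the commutation
\begin{equation*}
(1-\lambda)A^\sharp+\lambda B^\sharp \;\subset\; \bigl((1-\lambda)A+\lambda B\bigr)^\sharp.
\end{equation*}
Combined with volume preservation, this gives $\gamma_d((1-\lambda)A+\lambda B)\geq\gamma_d((1-\lambda)A^\sharp+\lambda B^\sharp)$, so it is enough to prove $(\ast)$ when $A,B$ are already convex hypographs in direction $e_d$.

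For such hypographs, $\gamma_d(A^\sharp)=\int_{\R^{d-1}}\Phi(s_A)\,d\gamma_{d-1}$, and $(1-\lambda)A^\sharp+\lambda B^\sharp$ is itself the hypograph of the concave sup-convolution $s$ defined by $s(x')=\sup\{(1-\lambda)s_A(x'_A)+\lambda s_B(x'_B) : x'=(1-\lambda)x'_A+\lambda x'_B\}$. Thus $(\ast)$ in dimension $d$ transforms into a $(d-1)$-dimensional \emph{functional} Ehrhard inequality for the concave profiles $s_A, s_B, s$. Applying the induction hypothesis to the convex super-level sets $\{s_A\geq a\},\{s_B\geq b\}\subset\R^{d-1}$, which by concavity of the profiles satisfy the Minkowski inclusion $\{s\geq(1-\lambda)a+\lambda b\}\supset(1-\lambda)\{s_A\geq a\}+\lambda\{s_B\geq b\}$, yields the required bound at each height $(a,b)$. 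The \emph{main obstacle} is to integrate these level-set inequalities consistently against the Gaussian weight in the height variable so as to recover $(\ast)$ in dimension $d$; this functional step invokes the base case once more, to interchange $\Phi^{-1}$ with the integration. Once it is handled, the induction closes, with half-spaces, where $(\ast)$ is an equality, as the natural extremizers.
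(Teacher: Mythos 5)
The paper itself does not prove Proposition~\ref{prop:ehr}; it is cited directly from Ehrhard (1983), so there is no in-paper argument to compare against. That said, your plan mirrors the skeleton of Ehrhard's original approach: reduce to the endpoint form $(\ast)$, handle $d=1$ by a direct (if somewhat delicate) calculation, and use Gaussian symmetrization together with the inclusion $(1-\lambda)A^\sharp+\lambda B^\sharp\subset\bigl((1-\lambda)A+\lambda B\bigr)^\sharp$ to reduce to hypographs. Those steps are sound, and the inclusion lemma does follow from the $d=1$ case applied fiberwise.

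The gap is exactly where you flag it, and it cannot be closed the way you suggest. After reducing to hypographs, the ``functional step'' you need is: given $\varphi_A:=\Phi^{-1}\circ\mu_A$, $\varphi_B:=\Phi^{-1}\circ\mu_B$, $\varphi_s:=\Phi^{-1}\circ\mu_s$ concave on $\R$ (where $\mu_A(a)=\gamma_{d-1}(\{s_A\ge a\})$, etc.) and satisfying $\varphi_s((1-\lambda)a+\lambda b)\ge(1-\lambda)\varphi_A(a)+\lambda\varphi_B(b)$ for all $a,b$, conclude that $\Phi^{-1}\bigl(\int\Phi\circ\varphi_s\,d\gamma_1\bigr)\ge(1-\lambda)\Phi^{-1}\bigl(\int\Phi\circ\varphi_A\,d\gamma_1\bigr)+\lambda\Phi^{-1}\bigl(\int\Phi\circ\varphi_B\,d\gamma_1\bigr)$. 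Passing to the planar hypographs $H_A=\{(a,u):u\le\varphi_A(a)\}$ (and likewise $H_B$, $H_s$), this is precisely the \emph{two-dimensional} Ehrhard inequality applied to convex sets with $(1-\lambda)H_A+\lambda H_B\subset H_s$. It is strictly stronger than the $d=1$ base case (which only concerns indicators of intervals), and when $d=2$ it is literally the statement you are trying to prove, so the induction is circular at $d=2$. Ehrhard's actual argument avoids this: rather than a clean dimensional induction, he iterates the symmetrization along a sequence of directions and shows the iterates converge to a half-space, where the inequality is an equality. That convergence argument is the essential ingredient your outline is missing, so as written this is a proof sketch with the key analytic step unresolved rather than a proof.
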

Given $K\subset \R^n$ a closed convex set, and $y\in e_n^{\perp}  \hspace{1mm}( \approx \R^{n-1})$, let $I_y=(K-y)\cap \R e_n$. By convexity of $K$, if $y_0, y_1\in e_n^{\perp}$ and $\lambda\in [0,1]$, then $I_{\lambda}:=I_{(1-\lambda) y_0+\lambda y_1} \supseteq (1-\lambda) I_{y_0}+\lambda I_{y_1}$. Proposition \ref{prop:ehr}, $d=1$, gives $\Phi^{-1}(\gamma_1(I_{\lambda})) \geq (1-\lambda) \Phi^{-1}(\gamma_1(I_0))+\lambda \Phi^{-1}(\gamma_1(I_1))$. This implies that the following set, is convex
\begin{equation}
\label{eqn:ehr}
\mathcal{E}(K):=\{ (y,z)\in \R^{n-1} \times \R : z\leq \Phi^{-1}(\gamma_1(I_y)) \}.
\end{equation}
The set $\mathcal{E}(K)$ is usually called Ehrhard's symmetrization of $K$ in direction $e_n$. Now, for $z\in \R$, denote $K_z=(K-ze_n) \cap e_n^{\perp}$ the $(n-1)$-dimensional section of $K$, at height $z$ (or rather, its projection onto $e_n^{\perp}$). Another Ehrhard's symmetrization of $K$, is the following planar set :
\begin{equation}
\label{eqn:ehr2}
\mathcal{E}_2(K)=\{(x,z)\in \R^2 : x\leq \Phi^{-1}(\gamma_{n-1}(K_z))\}.
\end{equation}
By convexity of $K$, if $z_0, z_1\in \R$ and $z_{\lambda}:=(1-\lambda)z_0+\lambda z_1$, then $K_{z_{\lambda}} \supseteq (1-\lambda) K_{z_0} +\lambda K_{z_1}$. Hence, by Proposition \ref{prop:ehr}, with $d=n-1$, we see that $\mathcal{E}_2(K)$ is convex as well.

By the product structure of $\gamma_n$ (and of $\gamma_2$), it is immediate from the definitions (\ref{eqn:ehr}) and (\ref{eqn:ehr2}), that $\mathcal{E}$ and $\mathcal{E}_2$ preserve the Gaussian measure. In the next paragraph, we shall denote $\Lambda:=\mathcal{E}(K)$, and $L=\mathcal{E}_2(\mathcal{E}(K))=\mathcal{E}_2(\Lambda)$. In particular, $\gamma_2(L)=\gamma_2(\mathcal{E}_2(\Lambda))=\gamma_n(\Lambda)$, and $\gamma_n(\Lambda)=\gamma_n(\mathcal{E}(K))=\gamma_n(K)$.

To summarize (beforehand) section \ref{section:reductions}: one can assume that $u=r e_n$, and we should show that
\begin{equation}
\label{eqn:summ}
\gamma_n(K\circ (re_n)) \geq \gamma_n(\Lambda \circ (re_n)) \geq \gamma_2(L\circ (re_2)) \geq \gamma_2(H\circ (re_2))
\end{equation}
for some halfplane $H\subset \R^2$ of measure $\gamma_2(H)\geq \gamma_2(L)=\gamma_n(K)\geq \frac{1}{2}$, so that
$\gamma_n(K\circ (re_n))-\gamma_n(K)\geq \gamma_2(H \circ (re_2))-\gamma_2(H)$, and hence Theorem \ref{gaussianmono} is implied by its particular case Lemma \ref{halfplanelem}. The (one-dimensional) Gaussian isoperimetric inequality will justify the first inequality in (\ref{eqn:summ}), our choice of transform $K\circ u$ will easily yield the second inequality, while the planar Gaussian isoperimetric inequality (Proposition \ref{prop:gaussisop}, $d=2$), and the assumption $\gamma_n(K)\geq \frac{1}{2}$, will give the third one.

In the next section, we give the definition of $K\circ u$ (a Gaussian variant of Banaszczyk's transform $K*u$), before explaining the inequalities (\ref{eqn:summ}) in more details (see Steps (\ref{eqn:step1}, \ref{eqn:step2}, \ref{eqn:step3}) below). Then, in section \ref{section:halfplane}, we prove Lemma \ref{halfplanelem}, closing the proof of Theorem \ref{gaussianmono}.

\section{Reducing to the case of half-planes}
\label{section:reductions}

We define the transform $K\circ u$ as follows: if $u=0$, then set $K\circ 0=K*0=K$. Else, let $r=||u||_2>0$, and set $Z=Z_K(u)=\{y\in u^{\perp} : \gamma_1(I_y) \geq p_r\}$ where $I_y=(K-y)\cap \R u$ and where $p_r:=\gamma_1([-r,r])$. In particular, since $\frac{d\gamma_1}{dx}$ is even and radially decreasing, note that $Z\subset Z_B=\{y\in u^{\perp} : \lambda_1(I_y) \geq 2 r\}$, where $\lambda_1(I)$ denotes the Euclidean length of interval $I$.
We set
\begin{equation}
\label{dfn:gauss}
K\circ u:=(K+[-u,u])\cap (Z_K(u)+\R u) \subset (K+[-u,u]) \cap (Z_B +\R u)=K*u
\end{equation}
(note Definition (\ref{dfn:gauss}) is consistent with $K\circ 0=K$) where $K*u$ denotes Banaszczyk's transform (discussed in section \ref{section:intro}). In particular, $K\circ u\subset (K-u) \cup (K+u)$, for any $K$ (convex) and any $u\in \R^n$, which is why Theorem \ref{gaussianmono} implies a $7K$-version of Theorem \ref{Ban5K}, as recalled in the introduction.

Now we shall argue why proving Lemma \ref{halfplanelem} is sufficient for proving Theorem \ref{gaussianmono}.
Let us give an overview before going into details: the classical Ehrhard symmetrization, $\mathcal{E}(K)$, together with the one-dimensional Gaussian isoperimetric inequality, allows to reduce from the general case to that of hypographs. A second Ehrhard symmetrization, towards the plane, allows to reduce to \emph{non-increasing} planar hypographs, denoted $L_{\theta}$ below. Lastly, the (two-dimensional) Gaussian isoperimetric inequality, together with the assumption $\gamma_n(K)\geq \frac{1}{2}$, allows to reduce to only dealing with (decreasing) half-planes.

The advantage of the gaussian definition $K\circ u$ over its euclidean counterpart $K*u$ is that it easily gives the first two reductions (steps 1-2 below), and that the transform $T_r K=K \circ r e_n$ behaves well with respect to $\mathcal{E}_2$ (namely $\mathcal{E}_2 \circ T_r(\Lambda)=T_r\circ\mathcal{E}_2(\Lambda)$ for hypographs $\Lambda \subset \R^n$, see Claim \ref{claim2}).

\subsection{Proof that Lemma \ref{halfplanelem} implies Theorem \ref{gaussianmono}}

Let $K$ be a closed convex subset of $\R^n$ such that $\gamma_n(K) > 0$.  
We begin by showing the existence of a planar hypograph $L = L_{\theta}$, associated to a certain concave non-increasing function $\theta$, such that
\[
\gamma_n(K) = \gamma_2(L), \qquad 
\gamma_n(K \circ (r e_n)) \;\geq\; \gamma_2(L \circ (r e_2)).
\]
To achieve this (see steps 1 and 2 below), we perform two successive Ehrhard symmetrizations (setting $\Lambda=\mathcal{E}(K)$, and $L=\mathcal{E}_2(\Lambda)$), in order to reduce to $\R^2$.  
The definition of $K \circ u$ (see (\ref{dfn:gauss})) is such that it interacts well with the operators $\mathcal{E}$ and $\mathcal{E}_2$, making the reduction from $K$ to $L$ easy.

Next, we show the existence of a half-plane $H \subset \R^2$ such that
\[
\gamma_2(H) \;\geq\; \gamma_2(L), 
\qquad 
\gamma_2(H \circ (r e_2)) \;\leq\; \gamma_2(L \circ (r e_2)),
\]
which completes the reduction from $K$ to $H$, i.e., from Theorem~\ref{gaussianmono} to Lemma~\ref{halfplanelem}.  
For this last step, we use the Gaussian isoperimetric inequality together with the assumption $\gamma_n(K) \geq \frac{1}{2}$.

Let us recall two notations here, for the Ehrhard symetrizations $\Lambda=\mathcal{E}(K)$ and $L=\mathcal{E}_2(\Lambda)$, and their associated concave functions $\psi :\R^{n-1}\to \overline{\R}$ and $\theta : \R \to \overline{\R}$ :
\begin{align*}
\Lambda&=\Lambda_{\psi}=\{(y,z)\in \R^n | z\leq \psi(y)\} \hspace{2mm} &\text{where }\psi(y):=\Phi^{-1}(\gamma_1((K-y)\cap \R e_n))=:\Phi^{-1}(\gamma_1(I_y)) \hspace{2mm} ; \\
L&=L_{\theta}=\{(x,z)\in \R^2 | x\leq \theta(z)\} \hspace{1mm} &\text{where }\theta(z):=\Phi^{-1}(\gamma_{n-1}(\Lambda_z))=:\Phi^{-1}(\gamma_{n-1}((\Lambda-ze_n)\cap e_n^{\perp})) \hspace{1mm} ;
\end{align*}
in other words, since $\Lambda=\Lambda_{\psi}$, one has $\theta(z)=\Phi^{-1}(\gamma_{n-1}(\{y : \psi(y) \geq z\}))$.

Let us now give more details. Firstly, we may, without loss of generality, assume that
$u = r e_n,$ (with $r = \|u\|_2 > 0$).
Indeed, for any $\rho \in O(n)$, one has $\rho(K \circ u) = \rho(K) \circ \rho(u)$.  
Therefore, by rotational invariance of $\gamma_n$, and choosing $\rho$ such that $\rho(u) = u' = \|u\|_2 e_n$, we can assume $u = r e_n$.

\vspace{1mm}
\begin{equation}
\label{eqn:step1}
\text{\textbf{Step 1}. Let $\Lambda = \mathcal{E}(K)$. We show that } 
\gamma_n(K \circ (re_n)) \,\geq\, \gamma(\Lambda\circ (re_n)).
\tag{$\clubsuit$} 
\end{equation}

This is a direct consequence of Definition (\ref{dfn:gauss}) and of the one-dimensional Gaussian isoperimetric inequality. Indeed, one has $K\circ (re_n)=(K+[-r,r]e_n)\cap (Z_K+\R e_n)$ and, for the same convex set $Z_K\subset e_n^{\perp}$, one has $\Lambda \circ (re_n)=(\Lambda+[-r,r]e_n)\cap (Z_K+\R e_n)=(\Lambda + re_n) \cap (Z_K+\R e_n)$ with $Z_K=\{y\in e_n^{\perp} | \gamma_1(I_y)\geq \gamma_1([-r,r])\}=\{y\in e_n^{\perp} | \psi(y)\geq -w_r\}$, where $\Phi(-w_r)=\gamma_1([-r,r])$ defines $w_r$.

Denote $\pi_n$ the (orthogonal) reflection, in $\R^n$, with kernel $e_n^{\perp}$. If $K$ or $-K$ is a hypograph, then $\Lambda=K$ or $\Lambda=\pi_n(K)$, and so (\ref{eqn:step1}) holds with equality (since $\pi_n(\tilde{K} \circ re_n)=(\pi_n \tilde{K}) \circ re_n$, for any convex $\tilde{K}\subset \R^n$). Otherwise, the intervals $I_y=(K-y)\cap \R e_n$ are bounded (for all $y$), we denote $I_y=:[a_y,b_y] e_n$. By definition, one has $(\Lambda-y)\cap \R e_n=(-\infty, c_y] e_n$ with $\gamma_1((-\infty,c_y])=\gamma_1([a_y,b_y])$. The one-dimensional Gaussian isoperimetric inequality gives : $\gamma_1((-\infty, c_y+r])\leq \gamma_1(a_y-r,b_y+r])$. Integrating the latter over $y\in Z_K$ yields
\[\gamma_n(K\circ re_n)=\int_{Z_K} \gamma_1([a_y-r,b_y+r])d\gamma_{n-1}(y)\geq \int_{Z_K} \gamma_1((-\infty,c_y+r])d\gamma_{n-1}(y)=\gamma_n(\Lambda \circ re_n). \]
---
\begin{equation}
\label{eqn:step2}
\text{\textbf{Step 2}. Let $L=L_{\theta}= \mathcal{E}(\Lambda)$. We show that } 
\gamma_2(L \circ (re_2)) = \gamma_n(\Lambda\circ (re_n)).
\tag{$\spadesuit$} 
\end{equation}
In fact, Claim \ref{claim2} below implies that 
$\mathcal{E}_2(\Lambda \circ (r e_n)) = L \circ (r e_2),$
and since $\mathcal{E}_2$ preserves the Gaussian measure, (\ref{eqn:step2}) follows.
Let us introduce the notation 
\[T_r : \mathcal{K}_n \to \mathcal{K}_n, 
\qquad T_r(K) = K \circ (r e_n), \quad n \geq 2.
\]
(Strictly speaking, we should write $T_r^{(n)}$ for the map $T_r^{(n)}(K) = K \circ (r e_n)$ from $\mathcal{K}_n$ to itself. We slightly abuse notation and simply write $T_r$, letting the dimension of the ambient space implicit.)
\begin{claim}
\label{claim2}
  For any $n \geq 2$ and any concave $\psi : \R^{n-1} \to \overline{\R}$, one has
$\mathcal{E}_2 \circ T_r(\Lambda_\psi) \;=\; T_r \circ \mathcal{E}_2(\Lambda_\psi)$
where $\Lambda_{\psi}=\{(y,z)\in \R^n | z\leq \psi(y)\}$ denotes the hypograph of $\psi$.
\end{claim}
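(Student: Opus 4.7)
The plan is to verify the identity by direct computation: express both $\mathcal{E}_2(T_r(\Lambda_\psi))$ and $T_r(\mathcal{E}_2(\Lambda_\psi))$ as planar sets of the form $\{(x, z) : x \leq f(z)\}$, and check that the two resulting profiles coincide. A preliminary observation that will drive everything is that, because the super-level sets $\{\psi \geq z\}$ are nested and decreasing in $z$, the function $\theta$ defining $\mathcal{E}_2(\Lambda_\psi) = L_\theta$ is \emph{non-increasing} (and concave, by Ehrhard).

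First I would unpack $T_r(\Lambda_\psi)$. For a hypograph one has $I_y = (\Lambda_\psi - y) \cap \R e_n = (-\infty, \psi(y)]\, e_n$, so $\gamma_1(I_y) = \Phi(\psi(y))$ and the slab-defining set is $Z_{\Lambda_\psi} = \{y : \psi(y) \geq -w_r\}$. Combined with the obvious identity $\Lambda_\psi + [-r, r]e_n = \Lambda_{\psi + r}$, this yields
\[
T_r(\Lambda_\psi) \;=\; \{(y, z) \in \R^{n-1}\times\R : \psi(y) \geq -w_r \text{ and } z \leq \psi(y) + r\},
\]
whose section at height $z$ is $\{y : \psi(y) \geq \max(z - r,\, -w_r)\}$. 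Applying $\mathcal{E}_2$ (which by definition relabels each $z$-section by $\Phi^{-1}$ of its Gaussian measure) then produces
\[
\mathcal{E}_2(T_r(\Lambda_\psi)) \;=\; \{(x, z) \in \R^2 : x \leq \theta(\max(z - r,\, -w_r))\}.
\]

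Next I would compute $T_r(L_\theta)$ directly, which is where the non-increasingness of $\theta$ enters twice. First, $L_\theta + [-r, r]e_2 = \{(x, z) : x \leq \sup_{|t| \leq r} \theta(z - t)\} = \{(x,z) : x \leq \theta(z - r)\}$. Second, for $x \in \R$, $I_{x e_1} = \{z : \theta(z) \geq x\}$ is a half-line of the form $(-\infty, s(x)]$, so the condition $\gamma_1(I_{xe_1}) \geq \gamma_1([-r,r])$ becomes $s(x) \geq -w_r$, which reads $x \leq \theta(-w_r)$. Intersecting these two conditions, and invoking non-increasingness one last time to combine them, I obtain
\[
T_r(L_\theta) \;=\; \{(x, z) : x \leq \min(\theta(z - r),\, \theta(-w_r))\} \;=\; \{(x, z) : x \leq \theta(\max(z - r,\, -w_r))\},
\]
which matches the previous display and proves the claim.

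The main obstacle I expect is the bookkeeping: $T_r$ superposes two distinct modifications — the Minkowski shift by $[-r, r]e_n$ and the truncation coming from the slab $Z$ — and one must check that each interacts correctly with $\mathcal{E}_2$. The conceptual content of the claim is that both modifications act only through the $e_n$-coordinate direction, and that the non-increasingness of $\theta$ allows them to be rewritten as a single $\max$. Minor care is needed for degenerate cases where $\psi$ or $\theta$ takes values in $\{\pm\infty\}$, with the conventions $\Phi(\pm\infty) \in \{0, 1\}$ keeping the formulas valid throughout.
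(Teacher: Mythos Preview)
Your proof is correct and follows essentially the same route as the paper: both arguments compute the profile function of each side and show they coincide at $z\mapsto\theta(\max(z-r,-w_r))$. The paper packages the computation via intermediate functions $\psi_r,\theta_r,\theta'_r$ (stating the formula for $\theta_r$ without derivation), whereas you derive $T_r(L_\theta)$ explicitly from the non-increasingness of $\theta$; the content is the same.
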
 

\begin{proof}
We denote $\theta(z)=\Phi^{-1}(\gamma_{n-1}(\Lambda_z))=\Phi^{-1}(\gamma_{n-1}(\{y : \psi(y) \geq z\}
))$, so that $L_{\theta}=\mathcal{E}_2(\Lambda_{\psi})$. Let us also denote $\theta_r$ and $\psi_r$ the concave functions such that $\Lambda_{\psi} \circ re_n=\Lambda_{\psi_r}$, and such that $L_{\theta}\circ re_2=L_{\theta_r}$. In other words, $\theta_r$ and $\psi_r$ are given by
\[
\begin{minipage}{0.45\textwidth}
\[
\theta_r(z) :=
\begin{cases}
\theta(z-r)\phantom{,} & \text{if } z \geq -w_r + r,\\[2mm]
\theta(-w_r)\phantom{,} & \text{otherwise.}
\end{cases}
\]
\end{minipage}
\hfill
\begin{minipage}{0.45\textwidth}
\[
\text{and }\hspace{2mm}\psi_r(y) :=
\begin{cases}
\psi(y) + r\phantom{,} & \text{if } \psi(y) \geq -w_r,\\[1mm]
-\infty\phantom{,} & \text{otherwise.}
\end{cases}
\]
\end{minipage}
\]
Finally, let us denote $\theta'_r$ the concave function such that $\mathcal{E}_2(\Lambda_{\psi_r})=:L_{\theta'_r}$. By definition of $\mathcal{E}_2$, one has
\[\theta'_r(z)=\Phi^{-1}(\gamma_{n-1}(\{y | \hspace{1mm} \psi_r(y) \geq z\}))=\Phi^{-1}\left(\gamma_{n-1}\left(\left\{y | \hspace{1mm}\psi(y) \geq \max\{z-r, -w_r\}\right\}\right)\right)\]
and so $\theta'_r=\theta_r$, and hence $\mathcal{E}_2(\Lambda_{\psi} \circ re_n)=\mathcal{E}_2(\Lambda_{\psi_r})=L_{\theta'_r}=L_{\theta_r}=L_{\theta} \circ re_2$ as claimed.
\end{proof}
---

\textbf{Step 3}. We show that there exists an affine halfplane $H\subset \R^2$ such that 
\begin{equation}
\label{eqn:step3}
\gamma_2(H)\geq \gamma_2(L), \quad \text{ and} \hspace{2mm}
 \gamma_2(L \circ (r e_2)) \geq \gamma_2(H \circ (r e_2)).
\tag{$\blacklozenge$} 
\end{equation}

If $L=L_{\theta}$ is already a horizontal half-plane, i.e. if, for some $y_0\in \R$,
$L = \{(x,y) : y \leq y_0\},$
then  just take $H=L$. We can exclude this case, and therefore assume that $\theta(\R) \subset \R\cup \{-\infty\}$.  

Note that the case $\theta(-w_r) = -\infty$ is excluded, since this would imply ($\theta$ being non-increasing) that $L \subset H_0 := \{(x,y) : y \leq -w_r\}$, and hence
$\gamma_2(L) \;\leq\; \Phi(-w_r) = \gamma_1([-r,r]) \;\leq\; \tfrac{1}{3},$
contradicting the assumption $\gamma_2(L) = \gamma_n(K) \geq \tfrac{1}{2}$.

\begin{figure}
\centering
\includegraphics[width=0.90\linewidth]{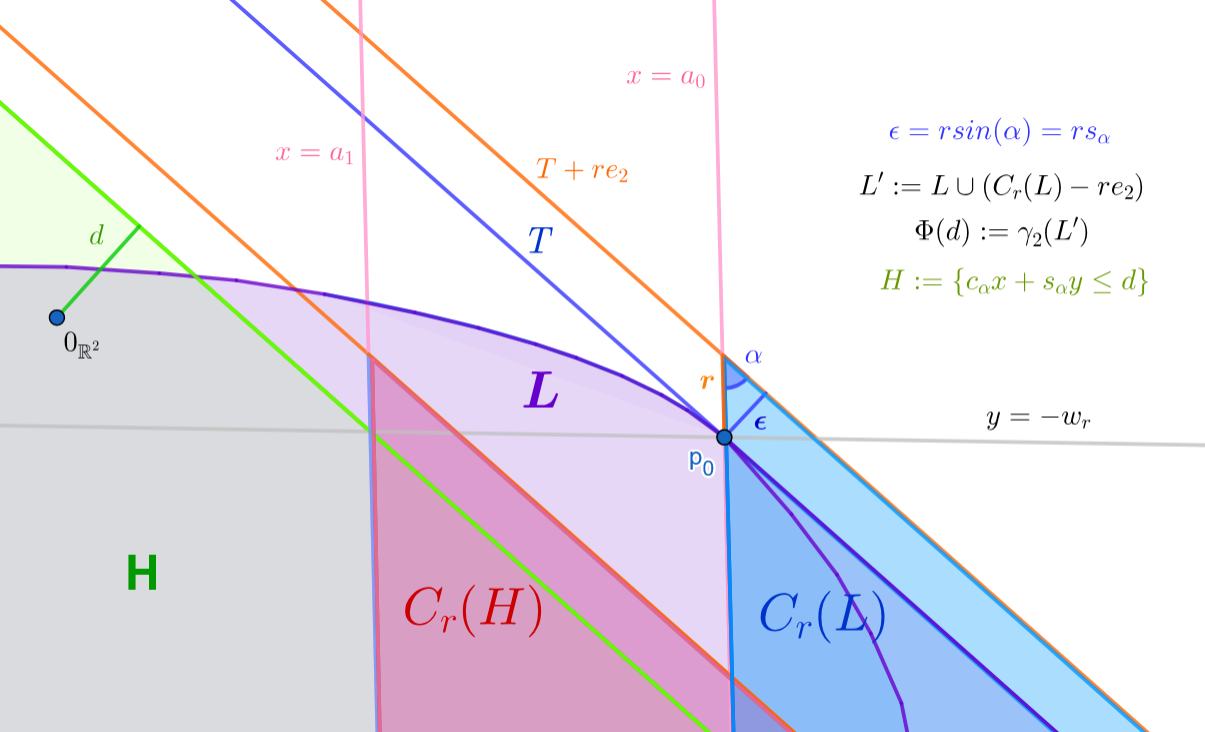}
\caption{\label{fig:step3} Visualizing the halfplane $H$, and the cones $C_r(L)$ and $C_r(H)$, for Step 3.}
\end{figure}

Thus set $a_0 := \theta(-w_r) \in \R$, and let $T$ be a line tangent to $L$ at $p_0 = (a_0, -w_r)$.  
(If there are several such tangent lines, choose the most vertical one, i.e. the one closest to $\partial L \cap \{y < -w_r\}$.)  

In case $T$ is vertical, meaning that $\theta$ is constant on $(-\infty, -w_r]$, we have $L \circ (r e_2) = L + r e_2 \supseteq L,$
and therefore 
we can skip\footnote{even though one can easily prove the existence of some $H$ as in (\ref{eqn:step3}) in that case as well} \textbf{step 3} and directly conclude (using only \textbf{steps} 1, 2) that \[\gamma_n(K\circ (re_n))-\gamma_n(K) \geq \gamma_2(L \circ (r e_2)) - \gamma_2(L) \geq 0.\] Therefore we may assume that
$\partial L \cap \{y = -w_r\} = \{p_0\},$
and that the line $T$ tangent to $L$ at $p_0$ has negative slope $- \tfrac{1}{\tan(\alpha)}$ for some $0 < \alpha < \tfrac{\pi}{2}$.
Denote $(c,s):=(\cos(\alpha),\sin(\alpha))\in [0,1]^2$. Thus $T$ has equation $cx+sy=D$ for some $D\geq \Phi^{-1}(\gamma_2(L)) \geq 0$.
Define the concave function $\tilde{\theta}\geq \theta$ by 
$$\tilde{\theta}(y)=\theta(y) \text{ if }y\geq -w_r  \quad \text{and } \hspace{1mm}\tilde{\theta}(y)=\frac{D-sy}{c}\quad  \text{ if } y\leq -w_r $$
so that $L\subseteq L':=L_{\tilde{\theta}}=\{(x,y) : x\leq \tilde{\theta}(y)\}$.
Observe that $L'\circ (re_2)=L\circ (re_2)=(L+re_2) \cap \{x\leq a_0\}$.

Define $H:=\{cx+sy \leq d\}$ with 
$d:=\Phi^{-1}(\gamma_2(L'))) \geq \Phi^{-1}(\gamma_2(L)) \geq 0$.
We shall argue that this $H$ satisfies $\gamma_2(H\circ (re_2)) \leq \gamma_2(L'\circ (re_2))$, and hence (\ref{eqn:step3}).

Denote $\varepsilon:=rs=r\sin(\alpha)$ so that $H+re_2=H+\varepsilon B_2^2=:H^{\varepsilon}$. Denote $C_r(L)=\{(x,y) : ca_0\leq cx\leq D+rs-sy\}$ the cone on the right of $\{x=a_0\}$ and below $T+re_2$ ; so that $L\circ (re_2)=(L'+re_2) \setminus C_r(L)$.

Denote $C_r(H)=\{(x,y) : ca_1\leq cx\leq d+rs-sy\}$ where $a_1\leq a_0$ is such that $ca_1-sw_r=d$ (i.e. $(a_0-a_1)\cos(\alpha)=D-d \geq 0$). In other words, $C_r(H)$ is the cone such that $H\circ (re_2)=(H+re_2)\setminus C_r(H)$. Observe that $C_r(L)=C_r(H)+(a_0-a_1) e_1$, and that $a_1=\frac{d+sw_r}{c}\geq 0$ (since $d\geq \Phi^{-1}(\gamma_2(L))=\Phi^{-1}(\gamma_n(K))\geq 0$, and since $w_r \geq r \geq 0$, as $r\leq \Phi^{-1}(2/3)$). Since $\frac{d\gamma_1(x)}{dx}$ is decreasing on $[0,+\infty)$, we deduce that $\gamma_2(C_r(L))\leq \gamma_2(C_r(H))$.

We claim that $L'+\varepsilon B_2^2 \subset L'+r e_2$ (see Claim \ref{exoL1} in Appendix, for a proof). It follows that
\begin{align}
\gamma_2(L\circ re_2)=\gamma_2(L'+re_2)-\gamma_2(C_r(L)) & \geq \gamma_2(L'+\varepsilon B_2^2) -\gamma_2(C_r(L)) \\
& \geq \gamma_2(H+\varepsilon B_2^2) -\gamma_2(C_r(H)) \\
&=\gamma_2(H+re_2) -\gamma_2(C_r(H))=\gamma_2(H \circ re_2).
\end{align}
where (5) follows from Claim \ref{exoL1}, (7) is by choice of $\varepsilon$ and by definition of $C_r(H)$ and (6) results from $\gamma_2(C_r(L))\leq \gamma_2(C_r(H))$ and from the two-dimensional Gaussian isoperimetric inequality.

\vspace{3mm}
Hence, it follows from (\ref{eqn:step1}, \ref{eqn:step2}, \ref{eqn:step3}) that:
\[
\gamma_n(K \circ (r e_n)) - \gamma_n(K) 
\geq
\gamma_n(\Lambda \circ (r e_n)) - \gamma_n(\Lambda)
=\gamma_2(L\circ (re_2))-\gamma_2(L)
\geq
\gamma_2(H \circ (r e_2)) - \gamma_2(H),
\]
and so $\gamma_n(K\circ u)-\gamma_n(K)\geq 0$ follows from Lemma~\ref{halfplanelem}.

\section{Monotonicity holds for half-planes}
\label{section:halfplane}

Therefore, to prove Theorem \ref{gaussianmono}, it only remains to prove the following Lemma.

\begin{lemma}
\label{planarlem}
Given $\alpha\in [0,\frac{\pi}{2}]$,and and let $d\geq 0$, denote  $(c,s):=(\cos(\alpha), \sin(\alpha))$, and $H=H(d,\alpha)=\{(x,y) : cx+sy \leq d\} \subset \R^2$. Then for any $r\in [0,\frac{1}{7}]$, $\gamma_2(H\circ (r e_2)) \geq \gamma_2(H)$.
\end{lemma}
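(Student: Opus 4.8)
The plan is to reduce to a one-parameter problem and then verify the monotonicity by an explicit calculus computation. First I would exploit rotational symmetry to normalize the geometry: after applying the rotation taking $(c,s)$ to $(1,0)$, the halfplane $H(d,\alpha)$ becomes the vertical halfplane $\{x\le d\}$, while the vector $re_2$ becomes $r(\text{something on the unit circle})$, say $r(-\sin\alpha, \cos\alpha)$ up to sign. Thus, writing $v = re_2$ in the rotated frame, I only ever need to understand $\gamma_2\big(\{x\le d\}\circ v\big)$ where $v$ is a fixed vector of length $r$. Unwinding the definition \eqref{dfn:gauss}: $H\circ v = (H+[-v,v])\cap(Z_H(v)+\mathbb{R}v)$, where $Z_H(v)=\{y\in v^\perp : \gamma_1(I_y)\ge p_r\}$ and $I_y=(H-y)\cap\mathbb{R}v$. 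Because $H$ is a halfplane, each slice $I_y$ is itself a halfline in direction $v$, and the condition $\gamma_1(I_y)\ge p_r = \gamma_1([-r,r])$ cuts out a halfplane $Z_H(v)+\mathbb{R}v$ whose boundary is parallel to $v$; explicitly, $H\circ v = (H+v)\cap\{\text{that halfplane}\}$ (using the sign of $v$ for which $H+v\supsetneq H$ loses the least).

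Next I would set up coordinates aligned with $v$: write points as $(s,t)$ with $t$ the coordinate along $v/|v|$ and $s$ the coordinate along $v^\perp$. In these coordinates $H$ is a halfplane $\{c's + c''t \le D\}$ for suitable constants depending on $\alpha, d$, the set $Z_H(v)+\mathbb{R}v$ is $\{s \le s_0\}$ for an explicit threshold $s_0 = s_0(\alpha,d,r)$ coming from the equation $\Phi(\cdot)-\Phi(\cdot) = p_r$ for the halfline slice, and $H+v$ is $H$ translated by $(0,r)$. So $H\circ v$ is the intersection of two halfplanes — a wedge (or a halfplane, in degenerate cases) — and $\gamma_2(H\circ v) - \gamma_2(H)$ becomes a difference of Gaussian measures of explicit planar regions, all expressible via $\Phi$ and the bivariate Gaussian. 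The quantity $\gamma_2(H+v) - \gamma_2(H)$ is a clean gain term (it equals $\Phi(\Phi^{-1}(\gamma_2(H)) + rc'')-\gamma_2(H)\ge 0$ by the one-dimensional isoperimetric identity for halfplanes, with $c''=\cos$ of the angle between $v$ and the normal of $H$), and the loss term $\gamma_2\big((H+v)\setminus(Z_H(v)+\mathbb{R}v)\big)$ is the Gaussian mass of a halfplane-slice of $H+v$. I would then show the gain dominates the loss for $r\le \tfrac17$.

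The main obstacle — and the only real content — is the final inequality comparing the gain term against the loss term uniformly in the two parameters $\alpha\in[0,\tfrac\pi2]$ and $d\ge 0$, for all $r\le\tfrac17$. I would reduce the number of free parameters: the worst case should be at $d = \Phi^{-1}(\gamma_2(H))$ as small as allowed, and monotonicity in $d$ can be checked by differentiating. For the $\alpha$-dependence, the two extreme cases $\alpha=0$ (here $v\parallel$ boundary of $H$, so $Z_H(v)+\mathbb{R}v$ either contains $H+v$ or the loss is a thin strip) and $\alpha = \tfrac\pi2$ (here $v\perp$ boundary, the translation is "straight in", and both terms are genuinely one-dimensional) should be handled first as sanity checks, and then the interior case by a single-variable estimate after fixing the relation between $d$, $\alpha$ and the normalization $\gamma_2(H)\ge\tfrac12$. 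Concretely I expect the inequality to follow from: (i) the slice-length threshold $s_0$ is at distance $\ge$ const from the origin when $\gamma_2(H)\ge\tfrac12$ and $r\le\tfrac17$, so the lost region sits in a low-density zone; (ii) bounding $e^{-s_0^2/2}$ against the derivative of the gain term, which amounts to a numerical inequality in $r$ that is comfortably true at $r=\tfrac17$ (and the constant $7$ rather than $5$ is exactly the slack that makes this elementary). I would present this as a short explicit computation with $\Phi$ and one or two applications of the mean value theorem, rather than anything requiring the full Ehrhard machinery.
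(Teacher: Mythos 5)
Your decomposition is the paper's: write $\gamma_2(H\circ(re_2))-\gamma_2(H)$ as the mass gained by translating the halfplane, $\gamma_2((H+re_2)\setminus H)$, minus the mass of the wedge $C_r$ cut off by the constraint set, and show the first term dominates the second. The preliminary rotation sending $(c,s)$ to $(1,0)$ followed by realigning coordinates with $v$ is a detour: after both frame changes you are back to an axis-aligned translation vector and a tilted halfplane, which is exactly how Lemma~\ref{planarlem} is already phrased, so nothing is simplified.

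The genuine gap is that the key estimate is never produced. The entire technical content of the proof is the upper bound on $\gamma_2(C_r)$; the paper obtains it by writing the wedge mass as an integral, applying the Gaussian tail bound $\Phi(-A)\le e^{-A^2/2}/(A\sqrt{2\pi})$, changing variables via the identity $y^2+x_y^2=z_y^2+(d+rs)^2$, and then using $d\ge 0$, $c\le 1$, $w_r-r>0$ to reduce everything to the one-variable inequality $\Phi(-w_r+r)\le r(w_r-r)$, which is checked for $r\le 1/7$. You correctly name the shape of the answer (the wedge sits in a low-density zone, the gain carries a density factor $e^{-(d+rs)^2/2}$, the end is a numerical inequality in $r$), but none of these bounds is carried out, and the proposed substitute reduction, namely fixing $\alpha$ and arguing that the worst $d$ is the smallest allowed by monotonicity in $d$, is not sound as stated: $\gamma_2(H\circ re_2)-\gamma_2(H)$ is strictly positive at $d=0$ and tends to $0$ as $d\to\infty$, so it is not nondecreasing in $d$, and the infimum sits at $d=\infty$, which cannot simply be substituted in. The paper sidesteps this by using $d\ge 0$ only to control the two Gaussian exponents, not as the endpoint of a monotonicity argument. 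As it stands your proposal is an accurate outline of the right strategy, but the computation that actually closes it is absent.
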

 
Note that, by invariance of $\gamma_2$ under reflections, one has $(\pi K) \circ (\pi u)=\pi(K\circ u)$ for any $K\in \mathcal{K}_2$, $u\in \R^2$, and any planar relfection $\pi$. This, and the obvious fact (see Definition (\ref{dfn:gauss})) that $K\circ u=K \circ (-u)$, for any $K,u$, shows that
Lemma \ref{halfplanelem} and Lemma \ref{planarlem} are equivalent.
\vspace{1mm}


Before giving a proof of Lemma \ref{planarlem}, let us remark that if $0\leq r\leq \frac{1}{7}\leq \Phi^{-1}(2/3)$, then in particular $\gamma_1([-r,r])<\frac{1}{3}$, which ensures that $w_r-r>0$, where $-w_r=\Phi^{-1}(\gamma_1([-r,r])$.

\begin{proof}[Proof of Lemma \ref{planarlem}]
Let us first exclude two simple cases. If $\alpha=0$, i.e. if $H=\{(x,y) : x\leq d\}$, then $H\circ (re_2)=H$ for any $r\geq 0$, so there is nothing to show. If $\alpha=\frac{\pi}{2}$, then $H=\{(x,y) : y\leq d\}$, and since $r\leq \frac{1}{7} < \Phi^{-1}(1/2)$ (i.e. since $-w_r<0<d$), one has $H\circ (r e_2)=H+r e_2$, so $\gamma_2(H\circ (r e_2))-\gamma_2(H)=\Phi(d+r)-\Phi(d) \geq 0$. Therefore we shall assume that $\alpha \in (0,\frac{\pi}{2})$. We denote $-w_r := \Phi^{-1}(\gamma_1([-r,r])<-r \leq 0$.

We observe that $H\circ (r e_2)=(H+re_2) \setminus C_r$, with $C_r=\{(x,y) : cx_0 \leq cx \leq d+rs-sy\}$, where $x_0=x_0(d,\alpha,r)$ is given by $cx_0-sw_r=d$. Therefore
\begin{align*}
    \gamma_2(H\circ (re_2))-\gamma_2(H)=\gamma_2((H+r e_2)\setminus H) -\gamma_2(C_r).
\end{align*}
On the one hand, $\gamma_2((H+r e_2)\setminus H)=\Phi(d+rs)-\Phi(d) \geq \frac{se^{-(d+rs)^2/2}}{\sqrt{2\pi}} r$, since $d\geq 0$ (and $r\geq 0$). On the other hand, using the notation $T(A)=\Phi(-A)$, and the estimate $T(A)\leq \frac{e^{-A^2/2}}{A\sqrt{2\pi}}$ (for any $A>0$), one can upper bound $\gamma_2(C_r)$ as follows (with $y_x:=\frac{cx-d-rs}{s}$):
\begin{align*}
    \gamma_2(C_r)&=\int_{x_0}^{+\infty} \Phi\left(\frac{d+rs-cx}{s}\right) e^{-x^2/2} \frac{dx}{\sqrt{2\pi}} = \int_{x_0}^{+\infty} T(y_x) e^{-x^2/2} \frac{dx}{\sqrt{2\pi}} \\
    &\leq \int_{y_0}^{+\infty} \frac{e^{-y^2/2} e^{-x_y^2/2}}{y \sqrt{2\pi}} \frac{sdy}{c\sqrt{2\pi}} \hspace{3mm} \text{ with } x_y=\frac{d+rs+sy}{c} \text{ and } y_0=w_r-r>0.\\
\end{align*}
Now $y^2+\left(\frac{d+rs+sy}{c}\right)^2=\left(\frac{y+s(d+rs)}{c}\right)^2+(d+rs)^2$, and so changing for $z_y=\frac{y+s(d+rs)}{c}$ one finds
\begin{equation}
\label{eqn:uppboundcone}
      \gamma_2(C_r)\leq  \frac{se^{-(d+rs)^2/2}}{\sqrt{2\pi}} \int_{y_0}^{+\infty} \frac{e^{-z_y^2/2}}{w_r-r} \frac{dy}{c\sqrt{2\pi}}=  \frac{se^{-(d+rs)^2/2}}{\sqrt{2\pi}} \frac{T(z_0)}{w_r-r} \leq \frac{se^{-(d+rs)^2/2}}{\sqrt{2\pi}} \frac{T(w_r-r)}{w_r-r} 
\end{equation}
\begin{align*}
\text{ where, (since $d\geq 0$, and $c\leq 1$),} \hspace{2mm} z_0:=\frac{y_0+s(d+rs)}{c}=\frac{w_r}{c}-\frac{r}{c}+\frac{sd}{c}+\frac{s^2r}{c} \geq \frac{w_r}{c} -rc \geq w_r-r , 
\end{align*}
justifying the last inequality in (\ref{eqn:uppboundcone}). Hence to show that $\gamma_2(H\circ (re_2))-\gamma_2(H)=\Phi(d+rs)-\Phi(d)-\gamma_2(C_r) \geq 0$, it suffices to argue that $T(w_r-r)=\Phi(-w_r+r) \leq r(w_r-r)$, for $0\leq r \leq \frac{1}{7}$.

Denote $\lambda_0>0$ the unique solution to $2+e^{-\lambda^2/2}=\lambda \sqrt{2\pi}$. Note that $(r\mapsto w_r-r)$ is strictly decreasing on $(0,+\infty)$. Denote $r_0>0$ the unique solution to $w_r-r=\lambda_0$. Thus, for $0\leq r \leq r_0$ :
\begin{align*}
    \Phi(-w_r+r)&\leq \Phi(-w_r)+\frac{re^{-(w_r-r)^2/2}}{\sqrt{2\pi}}=\gamma_1([-r,r])+\frac{re^{-(w_r-r)^2/2}}{\sqrt{2\pi}} 
    \leq r \frac{2+e^{-(w_r-r)^2/2}}{\sqrt{2\pi}} \leq r (w_r-r).
\end{align*}
Numerical simulations show that $r_0>0.149>\frac{1}{7}$.
\end{proof}

%

\section{Appendix: two remarks and a claim}
\subsection{Remarks on conjectural variants of Theorem \ref{gaussianmono}}
A natural question in view of Theorem \ref{gaussianmono}, is the following:
\begin{question}
\label{maudquestion}
    Does there exist an increasing function $r: (0,1)\to (0,+\infty)$ such that for any $n\geq 2$, and for any convex body $K\in \mathcal{K}_n$, if $u\in r(\gamma_n(K)) B_2^n$, then $\gamma_n(K\circ u)\geq \gamma_n(K)$?
\end{question}
In other words, does there exist, for each $0<p<1$, some $r_p>0$ such that for any convex body $K$ with $\gamma_n(K)\geq p$, and any $u\in r_p B_2^n$, one has $\gamma_n(K\circ u)\geq \gamma_n(K)$? One may ask the same question but restricting to centrally symmetric convex bodies ($K=-K$): let us denote $r_p^{(s)}$ the greatest such $r>0$, if it exists, and $r_p\leq r_p^{(s)}$ the maximal $r(p)$ which would answer Question \ref{maudquestion}. Section \ref{section:reductions} shows that determining $r_p$ is a planar question. More precisely, Steps 1 and 2 show that, for any fixed $p\in (0,1)$ and $r_0>0$ such that $\gamma_1([-r_0,r_0])\leq p$, the following statements are equivalent:
\begin{itemize}
\item[(i)] for any $n\geq 2$, and any $(K,u)\in \mathcal{K}_n\times (r_0 B_2^n)$, if $\gamma_n(K)\geq p$ then $\gamma_n(K\circ u)\geq \gamma_n(K)$ ;
\item[(ii)] for any non-increasing concave function $\theta :\R \to \R \cup \{-\infty\}$, if $L_{\theta}=\{(x,y)  :x\leq \theta(y)\}$ has measure $\gamma_2(L_{\theta})\geq p$, then for any $r\in [0,r_0]$, $\gamma_2(L_{\theta} \circ (re_2))\geq \gamma_2(L_{\theta})$.
\end{itemize}
However, as soon as $p<\frac{1}{2}$, it isn't clear whether $(ii)$ is equivalent with the following, weaker statement:
\begin{itemize}
\item[(iii)] for any half-plane $H\subset \R^2$, for any $r\in [0,r_0]$, if $\gamma_2(H)\geq p$, then $\gamma_2(H\circ (re_2))\geq \gamma_2(H)$.
\end{itemize}
Computations similar to those of section \ref{section:halfplane} show that $r_p \leq \exp(-c \Phi^{-1}(p)^2 e^{\Phi^{-1}(p)^2})$ (almost vertical halfplanes $H$ yield this upper bound). Note that $r_p^{(s)}$ is at most linear in $p$ (near $0$), because of symmetric slabs. We conjecture that $r_p^{(s)}$ is indeed linear in $p$ (near $0$). For the matter of upper bounding $\text{vb}(B_2^n, K)$ for symmetric convex bodies $K$ of small measure, the upper bound $r_p^{(s)} \leq c_1 p$ tells us that the above method, which yields $\text{vb}(B_2^n, K) \leq (r^{(s)}(\gamma_n(K))^{-1}$, doesn't give any bound better than the one directly deduced from Theorem \ref{Ban5K} and from the $S$-inequality (\cite{LO99}). 

Indeed, denote $\Psi(x)=2\Phi(x)-1=\gamma_1([-x,x])$, and $\Psi^{-1} : [0,1) \to [0,+\infty)$ its inverse. In particular, $\Psi^{-1}(p) \geq c_0 p$ for all $p$ (with $c_0=\sqrt{\frac{\pi}{2}}$), and $\Psi^{-1}(p)\leq c_1 p$ for $p\in (0,\frac{1}{2}]$ (take $c_1=c_0 e^{x_1^2/2}$, where $x_1=\Psi^{-1}(1/2)$). If $K=-K$, then the $S$-inequality states that $\gamma_n(tK)\geq \Psi(t\Psi^{-1}(p))$ for any $t\geq 1$. In particular, if ($K=-K$ and) $\gamma_n(K)=p\in (0,\frac{1}{2})$, then $\gamma_n(tK)\geq \frac{1}{2}$ for $t\geq \frac{\Psi^{-1}(1/2)}{\Psi^{-1}(p)}$. Taking $t=c_2 p^{-1}$ (with $c_2=c_0^{-1} x_1$), Theorem \ref{Ban5K} gives that $\text{vb}(B_2^n, K)=t\text{vb}(B_2^n, tK) \leq 5t=c_3 p^{-1}$ ; while the bound $\text{vb}(B_2^n, K)\leq \frac{1}{r_p^{(s)}}$ (yielded by Banaszcyk's argument recalled on page $2$) cannot yield better than $\text{vb}(B_2^n, K)\leq c_1^{-1} p^{-1}$, since $r_p^{(s)}\leq \Psi^{-1}(p)\leq c_1 p$ (for $p\leq \frac{1}{2}$).

At the other end, note that, by homogeneity, $\text{vb}(B_2^n, (1+\epsilon)\sqrt{n} B_2^n)=\frac{1}{(1+\epsilon)\sqrt{n}}\text{vb}(B_2^n,B_2^n)\geq 1-\epsilon$, while, for any given $\delta>0$, $\gamma_n((1+\epsilon)\sqrt{n}B_2^n)\geq 1-\delta$ for large enough $n$, by concentration. It results that $\lim_{p\to 1} r_p^{(s)}\leq 1$. In  words, if $u\in \R^n$ is such that monotonicity of the gaussian measure under Banaszczyk transform (either $K*u$ or $K\circ u$) holds for a class of symmetric convex bodies which contains Euclidean balls, then $||u||_2\leq 1$.

\subsection{Appendix : closing a small gap from Step (\ref{eqn:step3})}
We left an elementary claim unproven in section \ref{section:reductions}, within the last reduction (from planar hypographs $L_{\theta}$ to half-planes). We restate it here, and provide a proof. Denote $(e_1,e_2)$ the canonical basis of $\R^2$.

\begin{claim}
\label{exoL1}
Let $\theta : \R \to \R \cup\{-\infty\}$ be non-increasing and concave on $\R$, and affine on $(-\infty, -w]$ for some $w\in \R$. Denote $L_{\theta}=L:=\{(x,y) \in \R^2 : x\leq \theta(y)\}$. Assume $\theta$ has slope $-\tan(\alpha)$ on $(-\infty, -w]$, for some $0<\alpha<\frac{\pi}{2}$ (see Figure \ref{fig:inclusion}). Then for any $\varepsilon>0$, $L^{\varepsilon}:=L+\varepsilon B_2^2\subseteq L+\frac{\varepsilon}{\sin(\alpha)} e_2$.
\end{claim}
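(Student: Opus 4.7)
The plan is to use the representation of $L$ as the intersection of its supporting half-planes and verify the desired inclusion one half-plane at a time. The key observation is that the hypotheses on $\theta$ force every outward unit normal to the boundary of $L$ to have second coordinate at least $\sin(\alpha)$.

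First, I would determine the range of outward normals. Since $\theta$ is concave, its (sub)gradient is non-increasing, and since it equals $-\tan(\alpha)$ on $(-\infty,-w]$, every element of the subdifferential of $\theta$ on $(-w,+\infty)$ is $\leq -\tan(\alpha)$. At any boundary point of $L$ where the graph of $\theta$ has slope $-\tan(\beta)$ with $\beta \in [\alpha, \pi/2)$ (or $\beta = \pi/2$ for vertical tangents), a direct computation shows the outward unit normal is $(\cos\beta, \sin\beta)$, with $\beta \in [\alpha, \pi/2]$; at corners, the outward normals span an arc between two such extremes. If the effective domain of $\theta$ is bounded above (so $L$ acquires a horizontal top boundary), that boundary has outward normal $(0,1)$, still of the form $(\cos\beta, \sin\beta)$ with $\beta = \pi/2$. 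Thus every supporting half-plane $H$ of $L$ has an outward unit normal $n = (n_1, n_2)$ satisfying $n_2 \geq \sin(\alpha)$.

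Second, for such a half-plane $H = \{z \in \R^2 : \langle z, n\rangle \leq c\}$, standard identities give $H + \varepsilon B_2^2 = \{z : \langle z, n\rangle \leq c+\varepsilon\}$ and $H + (\varepsilon/\sin\alpha)\, e_2 = \{z : \langle z, n\rangle \leq c + (\varepsilon/\sin\alpha)\, n_2\}$; since $n_2 \geq \sin(\alpha)$, one has $H + \varepsilon B_2^2 \subseteq H + (\varepsilon/\sin\alpha)\, e_2$. Intersecting over all supporting half-planes of $L$ (valid because $L$ is a closed convex set with non-empty interior, so $L = \bigcap_H H$) yields
\[
L + \varepsilon B_2^2 \;\subseteq\; \bigcap_H \bigl(H + \varepsilon B_2^2\bigr) \;\subseteq\; \bigcap_H \Bigl(H + \tfrac{\varepsilon}{\sin\alpha}\, e_2\Bigr) \;=\; L + \tfrac{\varepsilon}{\sin\alpha}\, e_2,
\]
as claimed. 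The main (and relatively minor) obstacle is the first step: one must handle corners of $\theta$ via the subdifferential and the case when $\theta$ takes the value $-\infty$ on a half-line, which adds a horizontal piece to $\partial L$. In both subcases the outward normals remain in $\{(\cos\beta, \sin\beta) : \beta \in [\alpha, \pi/2]\}$, after which the rest of the argument is purely formal. Note that the inclusion is sharp along the affine part of $\partial L$ (where $\beta = \alpha$), which explains why the bound $\varepsilon/\sin(\alpha)$ cannot be improved.
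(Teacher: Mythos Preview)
Your proposal is correct and takes essentially the same approach as the paper: both arguments rest on the support-function/supporting-half-plane characterization of closed convex sets, identify that the only directions $u_\beta=(\cos\beta,\sin\beta)$ with $h_L(u_\beta)<+\infty$ are those with $\beta\in[\alpha,\pi/2]$, and then use $\sin\beta\ge\sin\alpha$ on that range. The only difference is phrasing—you intersect translated supporting half-planes, while the paper compares $h_{L+\varepsilon B_2^2}=h_L+\varepsilon$ with $h_{L+(\varepsilon/\sin\alpha)e_2}=h_L+(\varepsilon/\sin\alpha)\langle e_2,\cdot\rangle$ directly.
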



If $K
\subset R^n$ is closed and convex, then its support function $h_K : \R^n \to \R$ is the function $h_K(x)=\sup \{\langle y,x\rangle | y\in K\}$. Since $h_K$ is positively homogeneous, one often simply defines $h_K$ as a function on the sphere: $h_K(u)=\sup \{\langle y,u\rangle | y\in K\}$, $u\in \mathbb{S}^{n-1}$. For instance if $H$ is an affine halfspace, i.e. $H=H(u_0,d):=\{x\in \R^n : \langle x, u_0\rangle \leq d\}$ for some $u_0\in \mathbb{S}^{n-1}$, $d\in \R$, then $h_H(u_0)=d$ and $h_H(u)=+\infty$ for any $u\in \mathbb{S}^{n-1} \setminus \{u_0\}$. For any closed convex set $K\subset \R^n$, one has $K=\cap_{u\in \mathbb{S}^{n-1}} H(u, h_K(u))$, the non-trivial inclusion is given for instance by Hahn-Banach theorem. In particular, given two closed convex sets $K, L \subset \R^n$, the inclusion $K\subset L$ holds if and only if $h_K(u)\leq h_L(u)$ for all $u\in \sphere$.

Support functions behave well with Minkowski sums of convex sets : if $A, B\in \mathcal{K}_n$, then their sum $A+B=\{a+b \quad| a\in A,b\in B\}$ has support function $h_{A+B}=h_A + h_B$. If $B=B_2^n$, then $h_B=1$ on $\sphere$, while if $B=\{x\}$ for some $x\in \R^n$, then $h_B(u)=\langle x,u \rangle$ for all $u$.

\begin{proof}[Proof of Claim \ref{exoL1}]
 Let $(e_1, e_2)$ denote the canonical basis of $\R^2$. Denote $L_{\theta}=L:=\{(x,y) \in \R^2 : x\leq \theta(y)\}$, with $\theta$ as in Claim \ref{exoL1}. We aim at showing that $L+rs B_2^2:=L+r\sin(\alpha) B_2^2 \subset L+r e_2$ for any $r>0$. It suffices to argue that $h_L(u_{\beta})+r \sin(\alpha) \leq h_L(u_{\beta})+r\sin(\beta)$ for any $\beta \in [0.2\pi]$, where $u_{\beta}:=\cos(\beta) e_1+\sin(\beta) e_2$. Since $\theta$ is concave and non-increasing, it is clear that $h_L(u_{\beta})=+\infty$ unless $\beta\in [0,\frac{\pi}{2}]$. Under the additional assumption on $\theta$, note that, within the halfplane $\{y\leq -w\}$, the boundary of $L$ coincides with a line $T$ perpendicular to $u_{\alpha}$, so it is clear that $h_{L}(u_{\beta})=+\infty$ when $0\leq \beta \leq \alpha$. The desired inequality clearly holds in the remaining range, when $\beta\in [\alpha, \frac{\pi}{2}]$.
\end{proof}

\begin{figure}
\centering
\includegraphics[width=0.90\linewidth]{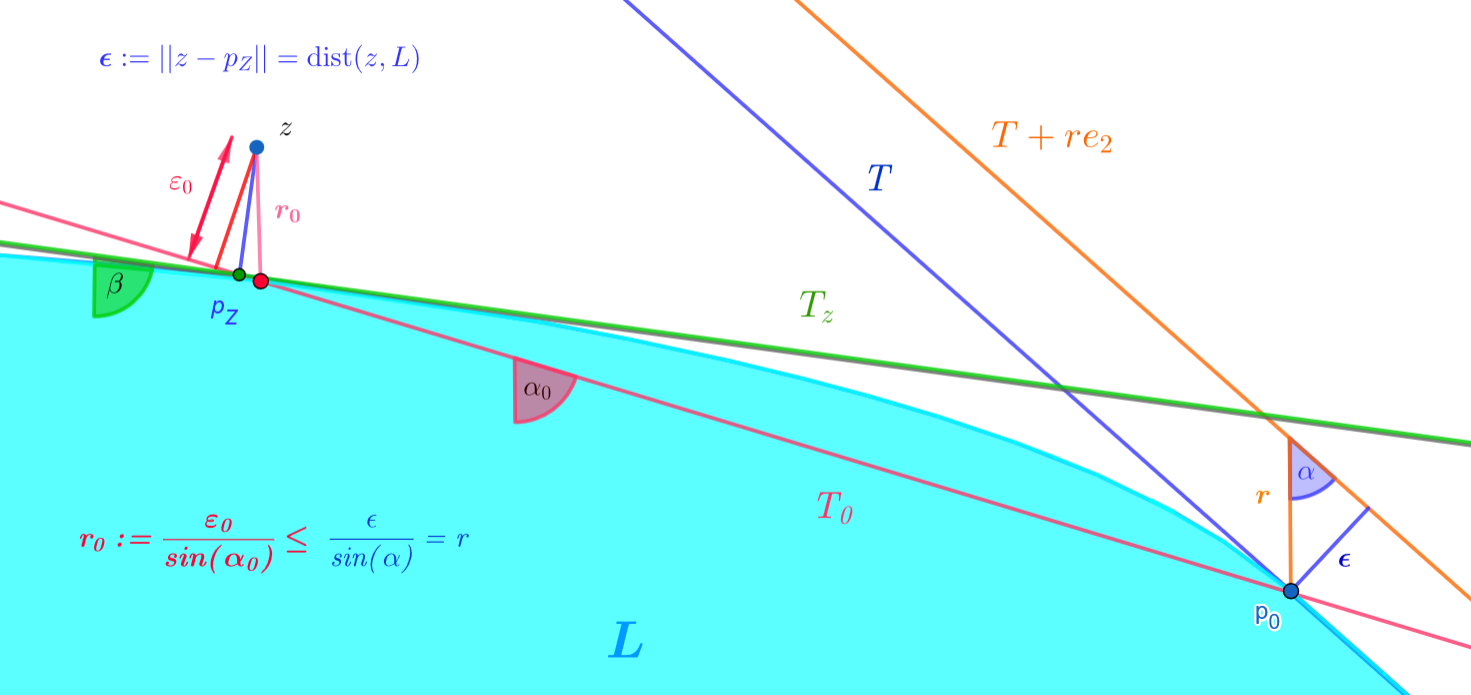}
\caption{\label{fig:inclusion} A proof by picture of Claim \ref{exoL1}}
\end{figure}

\textbf{Acknowledgments.} The author would like to thank Matthieu Fradelizi, Eli Putterman, and Kasia Wyczesany for their comments on a preliminary draft, which helped improve the presentation of this paper.  The author is indebted to Piotr Nayar and to Stanislaw Szarek for helpful discussions.

\bibliographystyle{plain}
\bibliography{Ban7K.bib}

\end{document}